\def\NZQ{\Bbb}               
\def\NN{{\NZQ N}}
\def\ZZ{{\NZQ Z}}
\def\RR{{\NZQ R}}
\def\frk{\frak}               
\def\pp{{\frk p}}
\def\qq{{\frk q}}
\def\mm{{\frk m}}
\def\Phi{{\frk n}}
\def\Phi{{\frk N}}
\def\MP{{\mathcal P}}
\def\MT{{\mathcal T}}
\def\ab{{\bold a}}
\def\xb{{\bold x}}
\def\cb{{\bold c}}
\def\ub{{\bold u}}
\def\opn#1#2{\def#1{\operatorname{#2}}} 
\opn\chara{char} \opn\length{\ell} \opn\pd{pd} \opn\rk{rk}
\opn\projdim{proj\,dim} \opn\injdim{inj\,dim} \opn\rank{rank}
\opn\depth{depth} \opn\grade{grade} \opn\height{height}
\opn\embdim{emb\,dim} \opn\codim{codim}
\opn\Tr{Tr} \opn\bigrank{big\,rank}
\opn\superheight{superheight}\opn\lcm{lcm}
\opn\trdeg{tr\,deg}
\opn\reg{reg} \opn\lreg{lreg} \opn\ini{in} \opn\lpd{lpd}
\opn\size{size}\opn\bigsize{bigsize}
\opn\cosize{cosize}\opn\bigcosize{bigcosize}
\opn\sdepth{sdepth}\opn\sreg{sreg}
\opn\link{link}\opn\fdepth{fdepth}
\opn\div{div} \opn\Div{Div} \opn\cl{cl} \opn\Cl{Cl}
\opn\Spec{Spec} \opn\Supp{Supp} \opn\supp{supp} \opn\Sing{Sing}
\opn\Ass{Ass} \opn\Min{Min}\opn\Mon{Mon} \opn\dstab{dstab} \opn\astab{astab}
\opn\Ann{Ann} \opn\Rad{Rad} \opn\Soc{Soc}
\opn\Im{Im} \opn\Ker{Ker} \opn\Coker{Coker} \opn\Am{Am}
\opn\Hom{Hom} \opn\Tor{Tor} \opn\Ext{Ext} \opn\End{End}
\opn\Aut{Aut} \opn\id{id}
\opn\nat{nat}
\opn\pff{pf}
\opn\Pf{Pf} \opn\GL{GL} \opn\SL{SL} \opn\mod{mod} \opn\ord{ord}
\opn\Gin{Gin} \opn\Hilb{Hilb}\opn\sort{sort}
\opn\aff{aff} \opn\con{conv} \opn\relint{relint} \opn\st{st}
\opn\lk{lk} \opn\cn{cn} \opn\core{core} \opn\vol{vol}
\opn\link{link} \opn\star{star}\opn\lex{lex} \opn\sat{sat}
\opn\gr{gr}
\def\pot#1#2{#1[\kern-0.28ex[#2]\kern-0.28ex]}
\opn\dirlim{\underrightarrow{\lim}}
\opn\inivlim{\underleftarrow{\lim}}
\let\union=\cup
\let\sect=\cap
\let\Union=\bigcup
\let\Sect=\bigcap
\let\to=\rightarrow
\let\To=\longrightarrow
\let\wt=\widetilde{}
\def\Implies{\ifmmode\Longrightarrow \else
        \unskip${}\Longrightarrow{}$\ignorespaces\fi}
\def\implies{\ifmmode\Rightarrow \else
        \unskip${}\Rightarrow{}$\ignorespaces\fi}
\def\iff{\ifmmode\Longleftrightarrow \else
        \unskip${}\Longleftrightarrow{}$\ignorespaces\fi}
\newtheorem{Theorem}{Theorem}[section]
\newtheorem{Lemma}[Theorem]{Lemma}
\newtheorem{Corollary}[Theorem]{Corollary}
\newtheorem{Proposition}[Theorem]{Proposition}
\newtheorem{Remark}[Theorem]{Remark}
\newtheorem{Example}[Theorem]{Example}
\newtheorem{Examples}[Theorem]{Examples}
\let\epsilon\varepsilon
\let\kappa=\varkappa
\def\qed{\ifhmode\textqed\fi
      \ifmmode\ifinner\quad\qedsymbol\else\dispqed\fi\fi}
\def\textqed{\unskip\nobreak\penalty50
       \hskip2em\hbox{}\nobreak\hfil\qedsymbol
       \parfillskip=0pt \finalhyphendemerits=0}
\def\dispqed{\rlap{\qquad\qedsymbol}}
\opn\dis{dis}
\def\pnt{{\raise0.5mm\hbox{\large\bf.}}}
\opn\Lex{Lex}
\begin{document}
\title {Monomial ideals with primary components given by powers of monomial prime ideals}
\author {J\"urgen Herzog and Marius Vladoiu}\thanks{The second author was partially supported from grant PN--II--ID--PCE--2011--3--1023 nr. 247/2011 awarded by UEFISCDI}
\address{J\"urgen Herzog, Fachbereich Mathematik, Universit\"at Duisburg-Essen, Campus Essen, 45117
Essen, Germany} \email{juergen.herzog@uni-essen.de}

\address{Marius Vladoiu, Faculty of Mathematics and Computer Science, University of Bucharest, Str.
 Academiei 14, Bucharest, RO-010014, Romania, and}
\address{Simion Stoilow Institute of Mathematics of Romanian Academy, Research group of the project ID--PCE--2011--3--1023, P.O.Box 1--764, Bucharest
014700, Romania}\email{vladoiu@gta.math.unibuc.ro}

\subjclass[2010]{13C13, 13A30, 13F99,  05E40}
\keywords{Monomial ideals of intersection type, canonical primary decompositions,  polymatroidal ideals, edge ideals}
\begin{abstract}
We characterize monomial ideals which are intersections of monomial prime ideals and study classes of ideals with this property, among them polymatroidal ideals.
\end{abstract}
\maketitle

\section*{Introduction}

In this paper we study monomial ideals
which are intersections of  powers of monomial prime ideals, and call them  {\em monomial ideals of intersection type}. Any squarefree monomial ideal is of intersection type since it is actually an intersection of monomial prime ideals. Obviously, among the non-radical monomial ideals, the monomial ideals of intersection type are closest to squarefree monomial ideals.  Hence one may expect that they are easier to understand than general monomial ideals. Indeed, various special types of such ideals have been studied in the literature, among them the so-called ideals of tetrahedral curves, see \cite{MN} and \cite{FMN}. In \cite{FV} the authors discuss the problem when a monomial ideal of intersection type is componentwise linear. In that paper monomial ideals of intersection type are called intersections of Veronese type ideals.

In Section 1 of the present paper we deal with the problem to characterize those monomial ideals which are of intersection type. The answer is given in Theorem~\ref{prime_intersection} where it is shown that a monomial ideal $I\subset S=K[x_1,\ldots,x_n]$ is of intersection type if and only if for each associated prime ideal $\pp$ of $I$ the minimal degree of a generator of the monomial localization $I(\pp)$ of $I$ is bigger than the maximal degree of a non-zero socle element of $S(\pp)/I(\pp)$. Moreover, it is shown that if $I$ is of intersection type,  its presentation as an intersection of powers of monomial prime ideals is uniquely determined. We call this presentation the {\em canonical primary decomposition} of $I$.  One should notice that an ideal of intersection type, if it has embedded prime ideals, may have primary decompositions, different from the canonical one. The exponents of the powers of the monomial prime ideals appearing in the intersection are bounded above. Indeed, if $I=\Sect_{\pp\in \Ass(S/I)}\pp^{d_\pp}$, then $d_\pp\leq \reg(I(\pp))$. We say that $I$ is of {\em strong intersection type}, if $d_\pp=\reg(I(\pp))$ for all $\pp\in \Ass(S/I)$, and show that this is the case if and only if $I(\pp)$ has a linear resolution for all $\pp\in \Ass(S/I)$. It is clear that any squarefree monomial ideal is of strong intersection type, while monomial ideals of intersection type with embedded prime ideals may or may not be of strong intersection type.

In Section 2 we consider classes of ideals which are of (strong) intersection type. It is shown in Proposition~\ref{polym_c} that any polymatroidal ideal is of strong intersection type, and in Theorem~\ref{primdec_polym} we show that the canonical primary decomposition of polymatroidal ideals  is given in terms of the rank function of the underlying polymatroid.  Unfortunately we do not have a uniform description of the associated prime ideals  in terms of the rank function.  However for polymatroidal ideals of Veronese type and for transversal polymatroidal ideals the canonical primary decomposition is made explicit, see Examples~\ref{known_polym}. On the other hand, a coherent primary decomposition of  any polymatroidal ideal, which however may not always be irredundant, is given in Theorem~\ref{redundant}. Among the ideals of Borel type, the principal Borel ideals are precisely those which are of strong intersection type, as shown in Proposition~\ref{borel_c}. In the remaining part of Section~2 we consider powers of edge ideals. In Theorem~\ref{depth_2power} we characterize,  terms of their underlying graph, those edge ideals   whose second power is of intersection type, and prove in Corollary~\ref{higher} that if the square of an edge ideal is not of intersection type, then so are all its higher powers.  It would be of interest to classify all graphs with the property that all its powers are of intersection type.

In the last section we list a few properties of ideals of intersection type. One nice property is that any ideal of intersection type is integrally closed, see Proposition~\ref{integrally_closed}.  In Corollary~\ref{supporting} the supporting hyperplanes of the Newton polyhedron of an ideal of intersection type are given in terms of its canonical primary decomposition. Finally in Theorem~\ref{symbolic_power} it is shown that if $I^k$ is of intersection type, then its  $\ell^{th}$ symbolic power $I^{(\ell)}$  of $I$ is contained in $I^k$, where $\ell = \reg(I^k)$. By using this fact and a result of Kodiyalam \cite{K}, and assuming that all powers of $I$ are of intersection type, we show in the second part of Theorem~\ref{symbolic_power} that $I^{(rk)}\subset I^k$ for all $k\gg 0$, where $r=\rho(I)+1$ and where  $\rho(I)$ is the minimum of $\theta(J)$  over all graded
reductions $J$ of $I$. Here $\theta(J)$ denotes the maximal degree of a generator in a minimal set of generators of $J$.

\section{When is a monomial ideal the intersection of  powers of monomial prime ideals? }

Let $S=K[x_1,\ldots,x_n]$ be the polynomial ring in $n$ variables over the field $K$ with graded maximal ideal $\mm=(x_1,\ldots,x_n)$, and let $I\subset S$ be a monomial ideal. In general, unless $I$ is squarefree,  the ideal $I$ can not be written as intersection of powers of monomial prime ideals. In the following we give a necessary and sufficient condition for $I$  to have such a presentation, and show that this presentation is unique once it is irredundant.
To describe the main result of this section we introduce some notation. Let $\pp$ be a monomial prime ideal, that is, $\pp$ is a prime ideal generated by a subset of the variables. As in \cite{HRV} we denote by $I(\pp)$ the monomial localization of $I$,  and by $S(\pp)$ the polynomial ring over $K$ in the variables which belong to $\pp$. Recall that $I(\pp)\subset S(\pp)$ is the monomial ideal which is obtained from $I$ as the image of the $K$-algebra homomorphism $\varphi\: S\to S(\pp)$ with $\varphi(x_i)=x_i$ if $x_i\in \pp$, and $\varphi(x_i)=1$,  otherwise. Monomial localizations are compatible with products and intersections. In other words, if $I$ and $J$ are monomial ideals, then $(IJ)(\pp)=I(\pp)J(\pp)$ and $(I\sect J)(\pp)=I(\pp)\sect J(\pp)$.

We denote by $V^*(I)$ the set of monomial prime ideals containing $I$. It is known that $\Ass(S/I) \subset V^*(I)$ (see for example \cite[Corollary 1.3.9]{HH}),  and that $\pp\in V^*(I)$ belongs to $\Ass(S/I)$ if and only if $\depth(S(\pp)/I(\pp))=0$. The latter is the case if and only if $I(\pp):\mm_\pp\neq I(\pp)$ where $\mm_\pp$ denotes the graded maximal ideal of $S(\pp)$. The $K$-vector space $(I(\pp):\mm_\pp)/ I(\pp)\subset S(\pp)/I(\pp)$ is called the {\em socle} of $S(\pp)/I(\pp)$, and denoted by $\Soc(S(\pp)/I(\pp))$. Thus $\pp\in \Ass(S/I)$ if and only if $\Soc(S(\pp)/I(\pp))\neq 0$. Since the colon ideal of monomial ideals is again a monomial ideal it follows that $\Soc(S(\pp)/I(\pp))$ is generated by the elements $u+I(\pp)$ where $u\in S(\pp)$ is a monomial with $u\not\in I(\pp)$ and  $\mm_\pp u\subset I(\pp)$. Since $\Soc(S(\pp)/I(\pp))$ is a finite dimensional graded $K$-vector space we may define the number
\[
\max(\Soc(S(\pp)/I(\pp)))=\max\{i\:\, (\Soc(S(\pp)/I(\pp)))_i\neq 0\}
\]
for any $\pp\in \Ass(S/I)$. We also set
\[
\min(M)=\min\{i\:\; M_i\neq 0\}
\]
for any finitely generated  graded $S(\pp)$-module $M$.  Finally we denote by $\wt{I(\pp)}$ the {\em saturation} of $I(\pp)$ which is defined to be the monomial ideal $\Union_k I(\pp): \mm_\pp^k$.

In the sequel we will also use the following notation: for a graded ideal $I$  and an integer $k\geq 0$ we denote by $I_{\geq k}$ the ideal generated by all graded components $I_i$ of $I$ with $i\geq k$.

\begin{Theorem}
\label{prime_intersection}
Let $I$ be a monomial ideal.
\begin{enumerate}
\item[(a)] The following conditions are equivalent:
\begin{enumerate}
\item[(i)] $I$ is an intersection of powers of monomial prime ideals;
\item[(ii)] for all $\pp\in\Ass(S/I)$ there exist positive integers $a_{\pp}$ such that $$I(\pp)=\wt{I(\pp)}\cap\mm_{\pp}^{a_{\pp}};$$
\item[(iii)] $\min(I(\pp))>\max(\Soc(S(\pp)/I(\pp)))$ for all $\pp\in\Ass(S/I)$.
\end{enumerate}
\item[(b)] Let $I=\Sect_{i=1}^r \pp_i^{d_i}$ be an irredundant presentation of $I$ as an intersection of powers of monomial prime ideals. Then  $\Ass(S/I)=\{\pp_1,\ldots,\pp_r\}$ and $$d_i=\max(\Soc(S(\pp_i)/I(\pp_i)))+1 \text{ for all } i=1,\ldots,r.$$
\end{enumerate}
\end{Theorem}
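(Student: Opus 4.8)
The plan is to establish the cycle by proving (ii)$\iff$(iii) directly, then (i)$\Rightarrow$(ii) by computing monomial localizations of prime powers, and finally (ii)$\Rightarrow$(i) by reconstructing $I$ from the exponents $a_\pp$; part (b) will then follow from these computations together with the uniqueness of the associated primes. For (ii)$\Rightarrow$(iii), note that $I(\pp)=\wt{I(\pp)}\cap\mm_\pp^{a_\pp}$ forces $I(\pp)\subseteq\mm_\pp^{a_\pp}$, hence $\min(I(\pp))\ge a_\pp$; so it suffices to see that every socle element has degree $<a_\pp$. Indeed, if a monomial $u$ satisfies $u\notin I(\pp)$, $\mm_\pp u\subseteq I(\pp)$ and $\deg u\ge a_\pp$, then $u\in\mm_\pp^{a_\pp}$ and $u\in\wt{I(\pp)}:\mm_\pp=\wt{I(\pp)}$, whence $u\in I(\pp)$, a contradiction. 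Conversely, for (iii)$\Rightarrow$(ii) I take $a_\pp=\min(I(\pp))$; the inclusion $\subseteq$ is clear, and if some monomial $w\in\wt{I(\pp)}\cap\mm_\pp^{a_\pp}$ were not in $I(\pp)$, I would choose a monomial multiple $v$ of $w$ maximal with respect to divisibility among the finitely many multiples of $w$ lying outside $I(\pp)$ (finitely many because $\mm_\pp^kw\subseteq I(\pp)$ for some $k$), and $v$ would represent a nonzero socle element of degree $\ge\deg w\ge a_\pp=\min(I(\pp))>\max(\Soc(S(\pp)/I(\pp)))$, which is absurd.

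Writing $\pp_A=(x_i:i\in A)$, the proof of (i)$\Rightarrow$(ii) rests on the identity $\pp_B^d(\pp_A)=S(\pp_A)$ if $B\not\subseteq A$ and $\pp_B^d(\pp_A)=(x_i:i\in B)^d$ if $B\subseteq A$. Starting from any presentation $I=\Sect_j\pp_{A_j}^{d_j}$ and using that localization commutes with intersection, for $\pp=\pp_A\in\Ass(S/I)$ — which is necessarily one of the $\pp_{A_j}$, since each $\pp_{A_j}^{d_j}$ is $\pp_{A_j}$-primary — this gives $I(\pp)=\mm_\pp^{a_\pp}\cap\Sect_{A_j\subsetneq A}(x_i:i\in A_j)^{d_j}$ with $a_\pp=\max\{d_j:A_j=A\}$. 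Since saturation commutes with finite intersections, annihilates the factor $\mm_\pp^{a_\pp}$, and fixes each remaining factor (whose radical is strictly contained in $\mm_\pp$, so that $\mm_\pp$ is not among its associated primes), I obtain $\wt{I(\pp)}=\Sect_{A_j\subsetneq A}(x_i:i\in A_j)^{d_j}$ and hence $\wt{I(\pp)}\cap\mm_\pp^{a_\pp}=I(\pp)$, which is (ii).

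The main obstacle is (ii)$\Rightarrow$(i), namely reconstructing $I$. Put $J=\Sect_{\pp\in\Ass(S/I)}\pp^{a_\pp}$. The inclusion $I\subseteq J$ is immediate: any generator $u$ of $I$ localizes to $u(\pp)\in I(\pp)\subseteq\mm_\pp^{a_\pp}$, so $\sum_{i\in A}\nu_i(u)\ge a_\pp$, i.e.\ $u\in\pp_A^{a_\pp}$. For $J\subseteq I$ I argue by contradiction. If a monomial $u\in J$ did not lie in $I$, then $\bar u\neq 0$ in $S/I$, and a maximal element of the set of colon ideals $\{I:(uv):uv\notin I\}$ is an associated prime $\pp=\pp_A\in\Ass(S/I)$ of the form $\pp=I:w$ with $w$ a monomial multiple of $u$. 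A direct check shows $w(\pp)\notin I(\pp)$: otherwise some $u'\in I$ would have $\nu_i(u')\le\nu_i(w)$ for all $i\in A$, and multiplying $w$ by the part of $u'$ supported outside $A$ would produce an element of $I$ of the form $w\cdot m$ with $m$ a monomial in the variables outside $A$; but $I:w=\pp_A$ forces every such $w\cdot m$ to lie outside $I$, a contradiction. As $\mm_\pp\,w(\pp)\subseteq I(\pp)$, the class of $w(\pp)$ is a nonzero socle element, so $\sum_{i\in A}\nu_i(u)\le\sum_{i\in A}\nu_i(w)=\deg w(\pp)\le\max(\Soc(S(\pp)/I(\pp)))<a_\pp$ by (ii)$\Rightarrow$(iii), contradicting $u\in\pp_A^{a_\pp}$. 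Hence $I=J$ is an intersection of prime powers. The delicate points are the extraction of the associated prime as a colon ideal $I:w$ with $w$ divisible by $u$, and the verification that $w(\pp)$ survives the localization.

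For part (b), the terms $\pp_i^{d_i}$ are $\pp_i$-primary and, irredundancy forcing the $\pp_i$ to be pairwise distinct, the first uniqueness theorem for reduced primary decompositions yields $\Ass(S/I)=\{\pp_1,\dots,\pp_r\}$. With distinct primes the computation above specializes to $I(\pp_i)=\wt{I(\pp_i)}\cap\mm_{\pp_i}^{d_i}$, i.e.\ $a_{\pp_i}=d_i$, so (ii)$\Rightarrow$(iii) gives $\max(\Soc(S(\pp_i)/I(\pp_i)))\le d_i-1$. For the reverse inequality, since $\pp_i\in\Ass(S/I)$ the ideal $I(\pp_i)$ is not saturated, so $I(\pp_i)\subsetneq\wt{I(\pp_i)}$ and therefore $\min(\wt{I(\pp_i)})\le d_i-1$; choosing a monomial $v\in\wt{I(\pp_i)}$ of degree $d_i-1$ we get $v\notin I(\pp_i)$ but $\mm_{\pp_i}v\subseteq\wt{I(\pp_i)}\cap\mm_{\pp_i}^{d_i}=I(\pp_i)$, a socle element of degree $d_i-1$. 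Thus $d_i=\max(\Soc(S(\pp_i)/I(\pp_i)))+1$.
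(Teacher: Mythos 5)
Your proof is correct. The implications (i)$\Rightarrow$(ii), (ii)$\Rightarrow$(iii) and part (b) run essentially parallel to the paper's argument: localize the given presentation, identify $\wt{I(\pp)}$ as the intersection of the factors whose radical is strictly contained in $\pp$, and read off the top socle degree (your explicit socle element $v$ of degree $d_i-1$ in (b) replaces the paper's computation of $\Soc(\wt{I(\pp_i)}/\wt{I(\pp_i)}_{\geq d_i})$, to the same effect). Where you genuinely diverge is in reconstructing $I$. The paper closes the cycle with (iii)$\Rightarrow$(i): setting $J=\Sect_{\pp}\pp^{d_\pp}$ with $d_\pp=\max(\Soc(S(\pp)/I(\pp)))+1$, it assumes $I\subsetneq J$, picks $\pp\in\Min(J/I)$ so that $(J/I)(\pp)$ is a nonzero module of finite length, and immediately obtains a socle element of $S(\pp)/I(\pp)$ of degree at least $d_\pp$. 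You instead prove (iii)$\Rightarrow$(ii) directly, via a divisibility-maximal multiple of a monomial in $\wt{I(\pp)}\sect\mm_\pp^{a_\pp}\setminus I(\pp)$ --- a step the paper does not need --- and then prove (ii)$\Rightarrow$(i) by extracting an associated prime as a maximal colon ideal $I:w$ with $w$ a monomial multiple of the offending monomial $u$, checking by hand that $w(\pp)\notin I(\pp)$. Both mechanisms manufacture the same contradiction, namely a socle element of degree $\geq a_\pp$ in some localization; the paper's route through $\Min(J/I)$ is shorter because finite length does the work of your colon-ideal and localization computations, while your route is more elementary and self-contained. The one step you should spell out is the standard fact that a maximal element of $\{\,I:(uv)\mid uv\notin I\,\}$ can be realized as $I:w$ with $w$ a \emph{monomial} multiple of $u$ and is then a monomial prime: run the usual maximal-annihilator argument restricted to monomial multiples, and use that a monomial ideal in which, for monomials, $m_1m_2$ in the ideal forces $m_1$ or $m_2$ in the ideal must be generated by variables. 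With that made explicit, everything checks.
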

\begin{proof}
(a)  For the proof of (i) \implies (ii) let $I=\Sect_{i=1}^r \pp_i^{a_i}$. We may assume that this presentation is irredundant. Then $\Ass(S/I)=\{\pp_1,\ldots,\pp_r\}$. Fix some integer $j$. Then
\[
I(\pp_j)=\Sect_{i \atop \pp_i\subsetneq\pp_j}\pp_i(\pp_j)^{a_i}\cap\mm_{\pp_j}^{a_j}.
\]
It follows from this presentation that $\wt{I(\pp)}=\Sect_{i,\; \pp_i\subsetneq\pp_j}\pp_i(\pp_j)^{a_i}$.

The proof of (ii)\implies (iii) follows immediately once that it is noticed that (ii) is equivalent to saying that $I(\pp)=\wt{I(\pp)}_{\geq a_{\pp}}$ for all $\pp\in\Ass(I)$.  This identity implies that $\min(I(\pp))\geq a_{\pp}$,  and that the maximal degree of a non-zero element  $u+I(\pp)\in \Soc(S(\pp)/I(\pp))$, $u$ a monomial,  is at most $a_{\pp}-1$ since $u\in\wt{I(\pp)}\setminus I(\pp)$.


(iii)\implies (i): For all $\pp\in \Ass(S/I)$ let $d_\pp=\max(\Soc(S(\pp)/I(\pp)))+1$. We claim that $I=\Sect_{\pp\in \Ass(S/I)}\pp^{d_\pp}$. In order to prove this, we first show that $I\subset\Sect_{\pp\in \Ass(S/I)}\pp^{d_\pp}$. Indeed, let $\pp\in \Ass(S/I)$ and suppose that $I\not\subset \pp^{d_\pp}$. Then $I(\pp)\neq I(\pp)_{\geq d_p}$. It follows that $\min(I(\pp))< d_\pp$, a contradiction.

Next assume that $I$ is properly contained in $J=\Sect_{\pp\in \Ass(S/I)}\pp^{d_\pp}$. Then there exists $\pp\in\Min(J/I)\subset \Ass(S/I)$ such that $J(\pp)/I(\pp)=(J/I)(\pp)\neq 0$. Let $u\in J(\pp)\setminus I(\pp)$. Then $\deg u\geq d_\pp$ because $u\in\mm_\pp$. Since $\pp\in\Min(J/I)$, it follows that $J(\pp)/I(\pp)$ has finite length. Thus $\max(\Soc(S(\pp)/I(\pp)))\geq \deg u\geq d_\pp$, a contradiction.

\medskip
(b) Since this presentation is irredundant, it represents an irredundant primary decomposition of $I$. This shows that $\Ass(S/I)=\{\pp_1,\ldots,\pp_r\}$. Fix an integer $i$. Then $I(\pp_i)=\wt{I(\pp_i)}\cap\mm_{\pp_i}^{d_i}=\wt{I(\pp_i)}_{\geq d_i}$ and $I(\pp_i)$ is properly contained in $\wt{I(\pp_i)}$. Thus
\[
\max(\Soc(S(\pp_i)/I(\pp_i)))=\max(\Soc(\wt{I(\pp_i)}/I(\pp_i)))= \max(\Soc(\wt{I(\pp_i)}/\wt{I(\pp_i)}_{\geq d_i}))
\]
It follows that  $\max(\Soc(S(\pp_i)/I(\pp_i)))=d_i-1$, as desired.
\end{proof}

We call a monomial ideal satisfying the equivalent conditions of Theorem~\ref{prime_intersection} to be a monomial ideal of {\em intersection type}.
The presentation of an ideal of intersection type as given in Theorem~\ref{prime_intersection}(b) is called the {\em canonical primary decomposition} of $I$. Of course, a monomial ideal of intersection type, if it  has embedded prime ideals, may have also other primary decompositions than just the canonical one.

\begin{Example}
\label{stanleyreisner}
{\em Let $J$ be the Stanley-Reisner ideal associated to the natural triangulation of the real projective plane, that is,
\[
J = (xyz,xyt,xzu,xtv,xuv,yzv,ytu,yuv,ztu,ztv)\subset S=K[x,y,z,t,u,v].
\]
We apply our Theorem~\ref{prime_intersection} to show that the ideal $J^2$ is not of intersection type. For this, note that $\mm\in \Ass(S/J^2)$ with $xyztuv+J^2$ a non-zero socle element of $S/J^2$ of degree $6$. Since $J^2$ is generated in degree $6$,  Theorem~\ref{prime_intersection}(a) implies that $J^2$ is not of intersection type.}

\end{Example}

If the monomial ideal $I$ is of intersection type, then the powers of the prime ideals in the intersection are naturally bounded. More precisely we have

\begin{Theorem}\label{bounded}
Let $I$ be a monomial ideal of intersection type with presentation
$
I=\Sect_{\pp\in \Ass(S/I)}\pp^{d_\pp}.
$
Then the following statements hold:
\begin{enumerate}
\item[(a)] $d_\pp\leq \reg(I(\pp))$ for all $\pp\in \Ass(S/I)$;
\item[(b)] $d_\pp= \reg(I(\pp))$ if and only if $I(\pp)$ has a linear resolution.
\end{enumerate}
In particular, $I=\Sect_{\pp\in \Ass(S/I)}\pp^{\reg(I(\pp))}$  if and only if $I(\pp)$ has linear resolution for all $\pp\in \Ass(S/I)$.
\end{Theorem}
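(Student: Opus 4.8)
The plan is to express both the exponent $d_\pp$ and the regularity $\reg(I(\pp))$ through the local cohomology of $S(\pp)/I(\pp)$ with respect to $\mm_\pp$ and then compare them. Write $\bar S=S(\pp)$, $\bar I=I(\pp)$ and $\mm=\mm_\pp$ for brevity. Two facts are available immediately: by Theorem~\ref{prime_intersection}(b) we have $d_\pp=\max(\Soc(\bar S/\bar I))+1$ and $\bar I=\wt{\bar I}_{\geq d_\pp}$, so that $\min(\bar I)\geq d_\pp$. I would first upgrade this last inequality to an equality, which is what drives part (b): since $\max(\Soc(\bar S/\bar I))=d_\pp-1$, there is a monomial $u\in\wt{\bar I}\setminus\bar I$ of degree $d_\pp-1$ with $\mm u\subset\bar I$; for any variable $x_i\in\pp$ the monomial $x_iu$ then lies in $\bar I$ and has degree $d_\pp$, whence $\min(\bar I)=d_\pp$.

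Next comes the key translation. Because $\wt{\bar I}=\Union_k(\bar I:\mm^k)$ is the saturation, the quotient $\wt{\bar I}/\bar I$ is exactly the $\mm$-torsion submodule $H^0_{\mm}(\bar S/\bar I)$, which has finite length. Every socle element is $\mm$-torsion, so $\Soc(\bar S/\bar I)=\Soc(H^0_{\mm}(\bar S/\bar I))$, and since a nonzero finite-length graded module has its top-degree component annihilated by $\mm$ (hence contained in its socle), the top degrees agree: $\max(\Soc(\bar S/\bar I))=a_0(\bar S/\bar I)$, where $a_0(\bar S/\bar I)=\max\{j:H^0_{\mm}(\bar S/\bar I)_j\neq0\}$. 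Thus $d_\pp-1=a_0(\bar S/\bar I)$. For part (a) I then invoke the standard local cohomology formula $\reg(\bar S/\bar I)=\max_{i\geq0}\{a_i(\bar S/\bar I)+i\}$ together with $\reg(\bar I)=\reg(\bar S/\bar I)+1$. Keeping only the $i=0$ term gives $\reg(\bar S/\bar I)\geq a_0(\bar S/\bar I)=d_\pp-1$, hence $\reg(\bar I)\geq d_\pp$.

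For part (b) I would use the equality $\min(\bar I)=d_\pp$ from the first paragraph together with the standard characterization that a graded ideal has a linear resolution if and only if its regularity equals the least degree of a minimal generator. If $\bar I$ has a linear resolution it is $d_\pp$-linear, so $\reg(\bar I)=d_\pp$. Conversely, if $\reg(\bar I)=d_\pp$, then every minimal generator has degree at least $\min(\bar I)=d_\pp$ and at most the maximal generator degree, which is bounded by $\reg(\bar I)=d_\pp$; hence $\bar I$ is generated in the single degree $d_\pp$, and as $\reg(\bar I)$ equals this generating degree the resolution is $d_\pp$-linear. Finally, the ``in particular'' claim combines (a) and (b): since $\reg(I(\pp))\geq d_\pp$ forces $\pp^{\reg(I(\pp))}\subseteq\pp^{d_\pp}$, one always has $\Sect_\pp\pp^{\reg(I(\pp))}\subseteq I$, with equality if and only if $I\subseteq\pp^{\reg(I(\pp))}$ for every $\pp$; after monomial localization this reads $\min(I(\pp))\geq\reg(I(\pp))$, which in view of $\min(I(\pp))=d_\pp\leq\reg(I(\pp))$ is equivalent to $d_\pp=\reg(I(\pp))$ for all $\pp$, hence by (b) to each $I(\pp)$ having a linear resolution.

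The step I expect to require the most care is the identification $\max(\Soc(\bar S/\bar I))=a_0(\bar S/\bar I)$: in general the socle is only a graded subspace of $H^0_{\mm}(\bar S/\bar I)$, so the equality of their top degrees must be justified through the finite length of $H^0_{\mm}(\bar S/\bar I)$, whose top-degree component is automatically socle. The converse direction of (b) is the other delicate point, where one must first rule out generators of $\bar I$ in degrees exceeding $d_\pp$ via the bound $\reg(\bar I)\geq$ (maximal generator degree) before concluding that single-degree generation with $\reg(\bar I)=d_\pp$ forces linearity.
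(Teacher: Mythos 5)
Your proof is correct. For part (a) and the forward half of (b) you follow essentially the paper's route: both arguments reduce to the observation that $d_\pp-1$ is the top degree of the finite-length module $\wt{I(\pp)}/I(\pp)$ and that this top degree is bounded by $\reg(S(\pp)/I(\pp))$. The paper obtains the bound from the short exact sequence $0\to \wt{I(\pp)}/I(\pp)\to S(\pp)/I(\pp)\to S(\pp)/\wt{I(\pp)}\to 0$ together with Eisenbud's regularity estimates, whereas you identify $\wt{I(\pp)}/I(\pp)$ with $H^0_{\mm_\pp}(S(\pp)/I(\pp))$ and quote $\reg=\max_i\{a_i+i\}$; these are two standard phrasings of the same fact, and your careful justification that $\max(\Soc(S(\pp)/I(\pp)))=a_0(S(\pp)/I(\pp))$ is exactly the point that makes either version work. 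The genuine divergence is in the converse direction of (b). The paper writes $I(\pp)=\wt{I(\pp)}_{\geq \reg(I(\pp))}$ and invokes the Eisenbud--Goto truncation theorem, which forces it to prove the auxiliary inequality $\reg(\wt{I(\pp)})\leq \reg(I(\pp))$. You instead first establish $\min(I(\pp))=d_\pp$ (by multiplying a top-degree socle representative by a variable --- a step the paper relies on but does not spell out) and then squeeze every generator degree between $\min(I(\pp))=d_\pp$ and $\reg(I(\pp))=d_\pp$, so that $I(\pp)$ is generated in the single degree $d_\pp=\reg(I(\pp))$ and is therefore linearly resolved. This is more elementary: it replaces the truncation theorem by the standard bound of the maximal generator degree by the regularity. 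Your derivation of the final ``in particular'' statement, which the paper leaves implicit, is also correct.
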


\begin{proof} (a) We consider the following short exact sequence
\begin{eqnarray}
\label{sequence}
0\To \wt{I(\pp)}/I(\pp) \To S(\pp)/I(\pp) \To S(\pp)/\wt{I(\pp)}\To 0.
\end{eqnarray}
Since $\wt{I(\pp)}/I(\pp)$ is a finite length module,  \cite[Corollary 20.19]{Ei} combined with Theorem~\ref{prime_intersection}  implies
\begin{eqnarray}
\label{last}
d_\pp&=&\max(\Soc(\wt{I(\pp)}/I(\pp)))+1=\reg(\wt{I(\pp)}/I(\pp))+1\\
&\leq & \reg(S(\pp)/I(\pp))+1=\reg(I(\pp)). \nonumber
\end{eqnarray}

(b) By Theorem~\ref{prime_intersection}  we have $I(\pp)=\wt{I(\pp)}_{\geq d_\pp}$. Thus if we assume that $I(\pp)$ has a linear resolution it follows that
$\reg(I(\pp))=\min(I(\pp))=d_\pp$.

Conversely, assume that $\reg(I(\pp))=d_\pp$. Then  $I(\pp)=\wt{I(\pp)}_{\geq \reg(I(\pp))}$. Thus by \cite[Theorem 1.2]{EG} it is enough to show that
$\reg(\wt{I(\pp)})\leq \reg(I(\pp))$. But this follows again from \cite[Corollary 20.19]{Ei} according to which
\[
\reg(S(\pp)/I(\pp))=\max\{\reg(\wt{I(\pp)}/I(\pp)),\reg(S(\pp)/\reg(\wt{I(\pp)})\}\geq \reg(S(\pp)/\reg(\wt{I(\pp)}).
\]
\end{proof}

A monomial ideal satisfying the equivalent conditions of Theorem~\ref{bounded} is said to be of {\em strong intersection type}. Obviously, any squarefree monomial ideal is of strong intersection type.

\begin{Example}
{\em  The ideal $$I=(x,y)\cap (x,z)\cap (x,t)\cap(x,y,z,t)^2=(x^2,xy,xz,xt,yzt)$$
of intersection type, and  $\mm=(x,y,z,t)\in \Ass(S/I)$.  But $I$ is not generated in a single degree, let alone  has a linear resolution. Therefore, $I$ is not of strong intersection type. }
\end{Example}

\section{Classes of monomial ideals of (strong) intersection type}

In this section we consider classes of ideals which are of intersection type or strong intersection type.

\medskip
\noindent
{\em  Polymatroidal ideals.} We fix a field $K$, and let $\mathcal P$ be a discrete polymatroid on the ground set $[n]$ of rank $d$, see \cite{HH2} where  these concepts are explained. The polymatroidal ideal associated with $\mathcal P$ is the monomial ideal $I\subset K[x_1,\ldots,x_n]$ generated by all monomials $\xb^\ub$ where $\ub\in B({\mathcal P})\subset \NN^n$. Here $B({\mathcal P})$ denotes the set of bases of $\MP$, namely  the set of all $\ub\in \MP$  whose modulus  $|\ub|=\sum_i u_i$ is maximal. All bases $\ub\in B(\MP)$ have the same modulus, namely $d$, which is defined to be the rank of $\MP$. In particular, $I$ is generated in the single degree $d$.

\begin{Proposition}\label{polym_c}
Any polymatroidal ideal is of strong intersection  type.
\end{Proposition}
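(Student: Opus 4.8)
The plan is to reduce everything to two known structural facts about polymatroidal ideals and then feed them into Theorems~\ref{prime_intersection} and~\ref{bounded}. The first fact is that for every monomial prime ideal $\pp$ the monomial localization $I(\pp)$ is again a polymatroidal ideal in $S(\pp)$; concretely it is the polymatroidal ideal of the contraction of $\MP$ along the variables not lying in $\pp$, as established in \cite{HRV}. The second fact is the classical result that every polymatroidal ideal has a linear resolution (see \cite{HH2}). Granting these, fix $\pp\in\Ass(S/I)$ and write $e_\pp=\min(I(\pp))$; since $I(\pp)$ is polymatroidal it is generated in the single degree $e_\pp$, and since it has a linear resolution we get $\reg(I(\pp))=e_\pp$, hence $\reg(S(\pp)/I(\pp))=e_\pp-1$.

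Next I would verify that $I$ is of intersection type by checking condition (iii) of Theorem~\ref{prime_intersection}(a). For $\pp\in\Ass(S/I)$ every socle element is annihilated by $\mm_\pp$, so $\Soc(S(\pp)/I(\pp))=(0:_{S(\pp)/I(\pp)}\mm_\pp)$ is contained in the local cohomology module $H^0_{\mm_\pp}(S(\pp)/I(\pp))$. Consequently the top nonzero degree of the socle is bounded by the top nonzero degree of $H^0_{\mm_\pp}(S(\pp)/I(\pp))$, which in turn is at most $\reg(S(\pp)/I(\pp))$. Combining this with the computation above yields $\max(\Soc(S(\pp)/I(\pp)))\le\reg(S(\pp)/I(\pp))=e_\pp-1<e_\pp=\min(I(\pp))$. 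As this holds for every $\pp\in\Ass(S/I)$, condition (iii) is satisfied and $I$ is of intersection type.

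Finally, having established that $I$ is of intersection type, I would invoke Theorem~\ref{bounded}: its part (b) (equivalently the concluding ``in particular'' statement) asserts that $d_\pp=\reg(I(\pp))$ precisely when $I(\pp)$ has a linear resolution. Since we have just seen that $I(\pp)$ is polymatroidal and therefore has a linear resolution for every $\pp\in\Ass(S/I)$, it follows that $d_\pp=\reg(I(\pp))$ for all such $\pp$, which is by definition the assertion that $I$ is of strong intersection type. The one genuinely nontrivial input here is the first ingredient, namely that monomial localization preserves the polymatroidal property; everything else is a formal consequence of the single-degree generation, the linear resolution, and the general socle--regularity inequality. Accordingly I expect the main obstacle to be isolating and citing (or, if a self-contained argument is wanted, reproving via the symmetric exchange property applied to the projected base set) the fact that $I(\pp)$ is polymatroidal.
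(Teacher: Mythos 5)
Your proof is correct and follows essentially the same route as the paper: monomial localization preserves polymatroidality (\cite{HRV}), polymatroidal ideals are generated in a single degree and have linear resolutions, and Theorem~\ref{bounded} then gives strong intersection type. The only difference is that you explicitly verify the intersection-type hypothesis of Theorem~\ref{bounded} via the socle--regularity inequality $\max(\Soc(S(\pp)/I(\pp)))\le\reg(S(\pp)/I(\pp))$, a step the paper's one-line invocation of that theorem leaves implicit; this is a worthwhile clarification rather than a deviation.
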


\begin{proof}
We recall that a polymatroidal ideal has linear quotients \cite[Theorem 5.2]{CH} and thus has a linear resolution. Moreover, $I(\pp)$ is polymatroidal for any monomial prime ideal $\pp$ which contains $I$ (see \cite[Corollary 3.2]{HRV}). In particular, $I(\pp)$ has a linear resolution for all   $\pp \in \Ass(S/I)$. Thus Theorem~\ref{bounded} yields the desired conclusion.
\end{proof}

In \cite{BaHe} it is conjectured that a monomial ideal is polymatroidal, if all its monomial localizations have a linear resolution. The following example shows that  it does not suffice to require  that $I$ is of strong intersection type to conclude that $I$ is polymatroidal, that is, to require that $I(\pp)$ has a linear resolution for all $\pp\in\Ass(S/I)$.

\begin{Example}
\label{thirdpower} {\em
 Let $J$ be the monomial ideal of  Example~\ref{stanleyreisner}, and let $\pp\in V^*(J^3)$. We claim that $J^3(\pp)$ has a linear resolution if and only if $\pp\in \Ass(S/J^3)$. Calculations with Singular \cite{Si} show that $\Ass(S/J^3)$ consists of all the  prime ideals of height $3,5$ and $6$ which belong to $V^*(J^3)$, altogether these are $17$ prime ideals. Moreover we have the following irredundant primary decomposition of $J^3$
\[
J^3=(v,u,y)^3\cap (v,u,x)^3\cap (v,t,z)^3\cap (v,t,x)^3\cap (v,z,y)^3\cap (u,t,z)^3\cap (t,u,y)^3\cap (u,z,x)^3
\]
\[
\cap (t,x,y)^3\cap (z,x,y)^3\cap (x,y,z,t,v)^6\cap (x,y,z,t,u)^6\cap (x,y,z,v,u)^6\cap
\]
\[
(x,y,t,v,u)^6\cap (x,z,t,v,u)^6\cap (y,z,t,v,u)^6\cap (x,y,z,t,v,u)^9.
\]
We notice that $\mm\in\Ass(S/J^3)$, and by \cite[Corollary 3.3]{Bo} we know that $J^3(=J^3(\mm))$ has a linear resolution. Next consider any prime ideal $\pp\in V^*(J^3)$ of height $5$. Then $J^3(\pp)$ is just the third power of the edge ideal of the cycle of length $5$. For example, if $\pp=(x,y,z,t,u)$ then $$J^3(\pp)=J(\pp)^3=(xt,tz,zy,yu,ux)^3.$$ By Singular one can check that also in this case $J^3(\pp)$ has linear resolution. Finally let $\pp\in V^*(J^3)$ be  of height $3$. Then $\pp$ is a minimal prime ideal of $J^3$. Hence  $J^3(\pp)=\mm_{\pp}^3$, and thus  has a linear resolution. We conclude that  $J^3(\pp)$ has a linear resolution for any associated prime ideal $\pp\in\Ass(S/J^3)$, and hence  $J^3$ is an ideal of strong intersection type.

On the other hand, if $J^3$ would be a polymatroidal ideal, then $J^3(\pp)$ would be polymatroidal for any monomial prime ideal $\pp\in V^*(J^3)$, see \cite[Corollary 3.2]{HRV}. Any height $4$ monomial prime ideal $\pp$ belongs to $V^*(J^3)$ and contains exactly two minimal prime ideals associated to $J^3$. It follows that $J^3(\pp)$ is no longer generated in a single degree and thus can not be polymatroidal. To exemplify this, let $\pp=(x,y,z,t)$. Then $J^3(\pp)=(t,x,y)^3\cap (z,x,y)^3$ is minimally generated in degrees $3,4,5$ and $6$.

Our discussion showed that $\Ass(S/J^3)$ is a proper subset of $V^*(J^3)$, and  that $J^3(\pp)$ has a linear resolution if and only  $\pp\in\Ass(S/J^3)$. }
\end{Example}

Next we want to describe the canonical primary decomposition of a polymatroidal ideal.  Let $\mathcal P$ be a discrete polymatroid on the ground set $[n]$, and denote by $2^{[n]}$ the set of all subsets of $[n]$. For a vector $\ub\in \ZZ^n$ and  $F\in
2^{[n]}$ we set $\ub(F)=\sum_{i\in F}u_i$.

The ground set {\em rank function} of ${\mathcal  P}$ is the function $\rho\: 2^{[n]}\to \ZZ_+$  defined by
setting
\[
\rho(F) = \max\{\ub(F)\:\; \ub\in B({\mathcal  P})\}\quad \text{for all $F\in 2^{[n]}$, $F\neq \emptyset$, }
\]
together with $\rho(\emptyset) = 0$.

\begin{Lemma}
\label{rank}
Let $\mathcal P$ be a discrete polymatroid of rank $d$ on the ground set $[n]$ and $I$ the polymatroidal ideal associated with $\mathcal P$. Furthermore, let $F\subset [n]$ and $\pp_F$ be the monomial prime ideal generated by the variables $x_i$ with $i\in F$. Then $I(\pp_F)$ is generated in degree $d-\rho([n]\setminus F).$
\end{Lemma}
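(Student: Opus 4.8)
The plan is to use the two facts that monomial localizations of polymatroidal ideals remain polymatroidal and that polymatroidal ideals are generated in a single degree; once equigeneration is in hand, the whole statement reduces to computing the minimal degree of a generator, which is an elementary counting argument using that all bases of $\MP$ have the same modulus $d$.

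First I would write down the generating set of $I(\pp_F)$ explicitly. Since $I$ is generated by the monomials $\xb^\ub$ with $\ub\in B(\MP)$, and the localization homomorphism $\varphi\:S\to S(\pp_F)$ sends $x_i\mapsto x_i$ for $i\in F$ and $x_i\mapsto 1$ for $i\notin F$, the ideal $I(\pp_F)=\varphi(I)$ is generated by the monomials $\varphi(\xb^\ub)=\prod_{i\in F}x_i^{u_i}$ with $\ub\in B(\MP)$. The generator arising from a basis $\ub$ thus has degree $\ub(F)=\sum_{i\in F}u_i$. Next I would invoke \cite[Corollary 3.2]{HRV}, according to which $I(\pp_F)$ is again polymatroidal and hence equigenerated in a single degree $e$. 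Therefore $e=\min(I(\pp_F))$, the least degree in which $I(\pp_F)$ is nonzero. Because $I(\pp_F)$ is a monomial ideal generated by the monomials listed above, this least degree is exactly the smallest degree occurring among those generators, that is $e=\min\{\ub(F)\:\;\ub\in B(\MP)\}$; indeed a generator of least degree cannot be a proper multiple of any other generator, so it is automatically a minimal generator.

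Finally I would evaluate this minimum. Every basis $\ub\in B(\MP)$ has modulus $|\ub|=\ub([n])=d$, so $\ub(F)=d-\ub([n]\setminus F)$ for all $\ub$. Minimizing over $\ub\in B(\MP)$ then turns the minimum of $\ub(F)$ into $d$ minus the maximum of $\ub([n]\setminus F)$, and by the definition of the ground set rank function this latter maximum is precisely $\rho([n]\setminus F)$. Hence $e=d-\rho([n]\setminus F)$, which is the claimed degree; as a sanity check, taking $F=[n]$ gives $\rho(\emptyset)=0$ and recovers that $I=I(\pp_{[n]})$ is generated in degree $d$.

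The only genuinely delicate point is the reduction to the minimal generator degree, and this is exactly where \cite[Corollary 3.2]{HRV} is indispensable: without knowing that $I(\pp_F)$ is generated in a single degree, the monomials $\prod_{i\in F}x_i^{u_i}$ would in general spread over several degrees, and there would be no single degree to report. Granting equigeneration, everything else rests on the identity $\ub(F)+\ub([n]\setminus F)=d$ together with the definition of $\rho$, and no further obstacle arises.
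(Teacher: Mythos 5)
Your proof is correct and follows essentially the same route as the paper: localize the generators, use \cite[Corollary 3.2]{HRV} to get equigeneration, identify the common degree with $\min\{\ub(F)\:\ub\in B(\MP)\}$, and convert via $\ub(F)=d-\ub([n]\setminus F)$ and the definition of $\rho$. The only cosmetic difference is that the paper explicitly notes the degenerate case $I\not\subset\pp_F$ (where $I(\pp_F)=S(\pp_F)$), which your argument covers implicitly.
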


\begin{proof}
As observed before in the proof of Proposition~\ref{polym_c}, the ideal $I(\pp_F)$ is polymatroidal for all $\pp_F$ which contain $I$. If $I\not\subset \pp_F$, then $I(\pp_F)=S(\pp_F)$ which by definition we may also consider as a polymatroidal ideal. The ideal $I(\pp_F)$ is generated in a single degree, say $t_F$. By the definition of monomial localization we have that  $t_F=\min\{\ub(F)\:\; \ub\in B(\MP)\}$. Since
\[
\min\{\ub(F)\:\; \ub\in B(\MP)\}=d-\max\{\ub([n]\setminus F)\:\; \ub\in B(\MP)\},
\]
the  desired formula follows.
\end{proof}

\medskip
The canonical primary decomposition of a polymatroidal ideal is now given as follows:

\begin{Theorem}\label{primdec_polym}
Let $\mathcal P$ be a  discrete polymatroid of rank $d$ with rank function  $\rho$, and let $I$ be the  polymatroidal ideal associated with  $\mathcal P$.  Then
\[
I=\Sect_{\pp_F\in\Ass(S/I)}
\pp_F^{d-\rho([n]\setminus F)}.
\]
is the canonical  primary decomposition of $I$.
\end{Theorem}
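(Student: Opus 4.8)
The plan is to combine the general theory from Section~1 with the two special features of polymatroidal ideals established above, namely Proposition~\ref{polym_c} and Lemma~\ref{rank}. By Proposition~\ref{polym_c} the ideal $I$ is of strong intersection type, so its canonical primary decomposition, as provided by Theorem~\ref{prime_intersection}(b), reads
\[
I=\Sect_{\pp\in\Ass(S/I)}\pp^{d_\pp}, \qquad d_\pp=\reg(I(\pp)),
\]
where the equality $d_\pp=\reg(I(\pp))$ is exactly the content of Theorem~\ref{bounded}(b) for ideals of strong intersection type. Since every associated prime of a monomial ideal is a monomial prime ideal, each $\pp$ occurring here equals $\pp_F$ for a unique $F\subset[n]$, and it remains only to identify the exponent $d_{\pp_F}$ with $d-\rho([n]\setminus F)$.

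First I would recall, as in the proof of Proposition~\ref{polym_c}, that $I(\pp_F)$ is again polymatroidal by \cite[Corollary 3.2]{HRV}, hence has a linear resolution and is generated in a single degree. For a graded ideal with linear resolution that is generated in a single degree $t$, the Castelnuovo--Mumford regularity equals $t$; this is the only auxiliary fact needed, and it is immediate from the definition of regularity, since the $i$-th syzygies are then generated in degree $t+i$. Applying Lemma~\ref{rank}, this single generating degree is $t=d-\rho([n]\setminus F)$. Therefore
\[
d_{\pp_F}=\reg(I(\pp_F))=d-\rho([n]\setminus F),
\]
which is precisely the claimed exponent.

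Assembling these observations yields the asserted decomposition. I do not expect any genuine obstacle here: the substantive work has already been carried out in Proposition~\ref{polym_c}, which supplies the linearity of the resolutions of all relevant localizations, and in Lemma~\ref{rank}, which gives the combinatorial identification of the generating degree. The only point requiring a little care is to note that the indexing set $\{\pp_F:\pp_F\in\Ass(S/I)\}$ appearing in the statement coincides with $\Ass(S/I)$, which is automatic because the associated primes of a monomial ideal are monomial primes; the resulting presentation is then irredundant and hence is the canonical primary decomposition in the sense of Theorem~\ref{prime_intersection}(b).
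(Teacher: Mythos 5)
Your proposal is correct and follows essentially the same route as the paper: both invoke Proposition~\ref{polym_c} to get strong intersection type, use Theorem~\ref{bounded} to identify the exponents as $\reg(I(\pp_F))$, observe that $I(\pp_F)$ has a linear resolution so its regularity equals its common generating degree, and then apply Lemma~\ref{rank} to compute that degree as $d-\rho([n]\setminus F)$. No gaps; your extra remarks about monomial associated primes and irredundancy are harmless elaborations of what the paper leaves implicit.
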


\begin{proof} Let $\pp_F\in \Ass(S/I)$. Then  $\reg I(\pp_F)$ is equal to the common degree of the generators of $I(\pp_F)$, since $I(\pp_F)$ has a linear resolution. By Lemma~\ref{rank}, this common degree is $d-\rho([n]\setminus F)$. Since $I$ is of strong intersection type, the desired conclusion follows from Theorem~\ref{bounded}.
\end{proof}

In general it is not so easy to identify the associated prime ideals of a polymatroidal ideal. The complete answer is known for ideals of Veronese type as well as for polymatroidal ideals associated with transversal polymatroids. We describe these cases in the following examples.

\begin{Examples}\label{known_polym}
{\em (a) Let $d, a_1,\ldots,a_n$ be positive integers. The ideal $I_{d;a_1,\ldots,a_n}$ generated by all monomials $x_1^{c_1}\cdots x_n^{c_n}$  of degree $d$ with $c_i\leq a_i$ for $i=1,\ldots,n$ is called of {\em Veronese type}. Ideals of Veronese type are polymatroidal.

Let $I= I_{d;a_1,\ldots,a_n}$ be the ideal of Veronese type with the property that $d\geq a_i$ for all $i$ and $a_1,\ldots,a_n\geq 1$. The set of associated prime ideals is described in \cite[Proposition 3.1]{Vl} as follows
\[
\Ass(S/I)=\{\pp_F\:\ \sum_{i=1}^n a_i\geq d-1+|F| \ \text{and} \ \sum_{i\notin F} a_i\leq d-1\}.
\]
Thus  by applying Theorem~\ref{primdec_polym} the canonical  primary decomposition of $I$ is given as
\[
I=\Sect_{\pp_F\in\Ass(S/I)}\pp_F^{d-\sum_{i\notin F}a_i},
\]
since the rank function of the corresponding polymatroid $\MP$ is given by $\rho(F)=\min\{d,\sum_{i\in F}a_i\}$ for all $F\subset [n]$.

\medskip
Consider for example  $I=I_{4;3,2,1}\subset S=K[x_1,x_2,x_3]$. Then  $$I=(x_1^3x_2,x_1^3x_3,x_1^2x_2^2,x_1^2x_2x_3,x_1x_2^2x_3),$$ and the above formulas  yield
\[
\Ass(S/I)=\{(x_1),(x_2,x_3),(x_1,x_3),(x_1,x_2),(x_1,x_2,x_3)\},
\]
and  the canonical  primary decomposition
\begin{eqnarray*}
I=(x_1)\cap(x_2,x_3)\cap(x_1,x_3)^2\cap(x_1,x_2)^3\cap(x_1,x_2,x_3)^4.
\end{eqnarray*}

(b) Let $I=\pp_{F_1}\cdots \pp_{F_d}\subset K[x_1,\ldots,x_n]$ be a transversal polymatroidal ideal. Without loss of generality we may assume that $\Union_{i=1}^d F_i=[n]$. The set of associated prime ideals of $I$ is described in \cite[Exercise 3.9]{Ei} as well as in \cite[Theorem 4.7]{HRV}, and an irredundant primary decomposition for $I$ is given in \cite[Corollary 4.10]{HRV}. Both descriptions are given in terms of a graph $G_I$ attached to the ideal $I$, see \cite[Section 4]{HRV}. The graph $G_I$ is defined on the vertex set $[d]$ with  $\{i,j\}$  an edge of  $G_I$  if $F_i\cap F_j\neq\emptyset$. In \cite[Theorem 4.7]{HRV} it is shown  that the associated prime ideals  of $I$ corresponded to the trees of the graph $G_I$ in the following way:
$\pp_F\in\Ass(S/I)$ if and only if there exists a tree $\MT$ such that  $\pp_F=\pp_{\mathcal T}$.

In the following we show that the  irredundant primary decomposition given in terms of the rank function  as described in  Theorem~\ref{primdec_polym} coincides with  the one given in \cite[Corollary 4.10]{HRV}. There the irredundant  primary decomposition of $I$ is given as
\[
I=\Sect_{\pp_F\in\Ass(S/I)}\pp_F^{a_F},
\]
where $a_F$ is the number of vertices of a tree $\MT$ in $G_I$ which is maximal with respect to the property  that $\pp_F=\pp_{\mathcal T}$, where $\pp_{\mathcal T}=\sum_{i\in V(\mathcal T)}\pp_{F_i}$.

On the other hand,  let $\rho$ be the rank function of the transversal polymatroid attached  to $I$.  In \cite[Section 9]{HH2} it is shown that
\[
\rho(F)=|\{i:\ F\cap F_i\neq\emptyset\}| \quad \text{ for all } F\subset [n].
\]
It follows that
\begin{eqnarray}
\label{d}
d=|\{i:\  F_i\subset  F\}|+\rho([n]\setminus F).
\end{eqnarray}
Now let $\MT$ be a tree in $G_I$ which is maximal with respect to the property  that $\pp_F=\pp_{\mathcal T}$. Suppose  we have shown that
\[
|\{i:\  F_i\subset F\}|=|\{i\:\; i\in V(\MT)\}|,
\]
then it will follow from \eqref{d} that $a_F=d-\rho([n]\setminus F)$ for all $F$ which indeed  we want to show.

The inequality $|\{i:\  F_i\subset F\}|\geq |\{i\:\; i\in V(\MT)\}|$ is obvious, since $F_i\subset F$ for all $i\in V(\MT)$. Suppose this inequality is strict. Then there exists $j\not \in V(\MT)$ with $F_j\subset F$. Since $F=\Union_{i\in V(\MT)}F_i$, we then notice that $F_j\sect F_i\neq \emptyset$ for some $i\in V(\MT)$, contradicting  the maximality of  $\MT$.
}
\end{Examples}



A redundant primary decomposition of a polymatroidal ideal, which in some cases is even irredundant and which is given only in terms of the rank function of the corresponding polymatroid can be described  as follows:

\medskip
Let $\MP$ be a discrete polymatroid of rank $d$ on the ground set $[n]$ and $\rho\: 2^{[n]}\to \ZZ_+$ its rank function. Then $\rho$ satisfies the following conditions:
\begin{enumerate}
\item[(i)] $\rho(\emptyset)=0$;
\item[(ii)] $\rho(F)\leq \rho(G)$ for all $F\subset G\subset [n]$;
\item[(iii)] $\rho(F)+\rho(G)\geq \rho(F\sect G)+ \rho(F\union G)$ for all $F,G\in 2^{[n]}$.
\end{enumerate}

We define the {\em complementary rank function}  $\tau\: 2^{[n]}\to \ZZ_+$ by setting $\tau(F)=d-\rho([n]\setminus F)$ for all $F\in 2^{[n]}$. The function $\tau$ also satisfies the properties (i) and (ii), but instead of (iii) one has:
\begin{enumerate}
\item[(iii')] $\tau(F)+\tau(G)\leq \tau(F\sect G)+ \tau(F\union G)$ for all $F,G\in 2^{[n]}$.
\end{enumerate}

A subset $F\subset [n]$ will be called {\em  $\tau$-closed}, if $\tau (G)<\tau(F)$ for any proper subset $G$ of $F$, and $F$ will be called {\em $\tau$-separable} if there exist non-empty subsets $G$ and $H$ of $F$ with $G\sect H=\emptyset$ and $G\union H=F$ such that $\tau(G)+\tau(H)=\tau(F)$. If $F$ is not $\tau$-separable, then it is called $\tau$-inseparable. Corresponding concepts exist for $\rho$.

\begin{Theorem}
\label{redundant}
Let $I$ be a polymatroidal ideal associated with the discrete polymatroid $\mathcal P$ with complementary rank function $\tau$. Then
\[
I=\Sect_{F}\pp_F^{\tau(F)},
\]
where the intersection is taken over all $F\subset [n]$ which are $\tau$-closed and $\tau$-inseparable.
\end{Theorem}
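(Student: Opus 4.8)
The plan is to reduce the statement to the already established canonical primary decomposition of Theorem~\ref{primdec_polym}, namely
\[
I=\Sect_{\pp_F\in\Ass(S/I)}\pp_F^{d-\rho([n]\setminus F)}=\Sect_{\pp_F\in\Ass(S/I)}\pp_F^{\tau(F)}.
\]
Thus the content of Theorem~\ref{redundant} is really a comparison between two index sets: the associated primes $\pp_F$ on one side, and the $\tau$-closed, $\tau$-inseparable subsets $F$ on the other. I would show that the intersection over the latter set produces exactly the same ideal as the intersection over $\Ass(S/I)$, even if the two index sets do not literally coincide. The strategy is therefore to prove two inclusions, showing that passing between the two families of $F$'s neither drops nor adds any genuinely new constraint in the intersection.

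First I would show that every associated prime $\pp_F$ (more precisely, its defining $F$) can be assumed $\tau$-closed and $\tau$-inseparable without changing the intersection, which gives the inclusion $\Sect_{F\ \tau\text{-closed},\ \tau\text{-insep}}\pp_F^{\tau(F)}\subset I$. The key observation here is that the component $\pp_F^{\tau(F)}$ is redundant in the intersection whenever $F$ fails one of the two closure properties. If $F$ is \emph{not} $\tau$-closed, there is a proper $G\subsetneq F$ with $\tau(G)\geq\tau(F)$; since $\pp_G\subset\pp_F$ one gets $\pp_G^{\tau(G)}\subset\pp_F^{\tau(G)}\subset\pp_F^{\tau(F)}$, so the $F$-component may be omitted. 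If $F$ is $\tau$-separable, say $F=G\sqcup H$ with $\tau(G)+\tau(H)=\tau(F)$, then I would use the identity $\pp_G^{\tau(G)}\cap\pp_H^{\tau(H)}\subset\pp_F^{\tau(F)}$ (which holds because $\pp_G+\pp_H=\pp_F$ and a product of monomials lying in the respective powers lands in $\pp_F^{\tau(G)+\tau(H)}$). In both cases the component indexed by $F$ is implied by components indexed by smaller sets, so restricting to $\tau$-closed, $\tau$-inseparable $F$ loses nothing, and the intersection still contains $I$; in fact it equals it once we check the reverse inclusion.

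For the reverse inclusion $I\subset\Sect_{F\ \tau\text{-closed},\ \tau\text{-insep}}\pp_F^{\tau(F)}$, I would verify that for \emph{every} $F\subset[n]$ one has $I\subset\pp_F^{\tau(F)}$, regardless of whether $\pp_F$ is associated. This follows directly from Lemma~\ref{rank}: the localization $I(\pp_F)$ is generated in degree $d-\rho([n]\setminus F)=\tau(F)$, so every generator $\xb^\ub$ of $I$ satisfies $\ub(F)\geq\tau(F)$, which is precisely the statement $\xb^\ub\in\pp_F^{\tau(F)}$. Hence $I$ is contained in every such power, a fortiori in the intersection over the $\tau$-closed, $\tau$-inseparable subsets. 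Combining the two inclusions yields the claimed decomposition.

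The main obstacle I anticipate is the first inclusion, specifically making the redundancy argument watertight: one must be sure that iteratively replacing a non-closed or separable $F$ by smaller sets terminates and stays within the polynomial ring's index set, and that the submodularity-type inequality (iii') is correctly invoked to control $\tau$ on the pieces. In the separable case one should double-check that $G$ and $H$ can themselves be further reduced to $\tau$-closed, $\tau$-inseparable sets whose components already appear in the intersection, so that no component outside our index family is secretly needed. Once the monotonicity $\tau(G)\le\tau(F)$ for $G\subset F$ (property (ii)) and the superadditivity on disjoint unions are in hand, the descent is straightforward, but it is the bookkeeping of this descent—rather than any deep algebra—that requires care.
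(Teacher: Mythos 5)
Your proposal is correct and follows essentially the same route as the paper: both first establish $I\subset\pp_F^{\tau(F)}$ for \emph{every} $F\subset[n]$ via the rank inequality $\ub(F)\geq\tau(F)$ (equivalently Lemma~\ref{rank}) together with Theorem~\ref{primdec_polym}, and both then discard the components with $F$ not $\tau$-closed or $\tau$-separable by exactly the two redundancy arguments you give, namely $\pp_G^{\tau(G)}\subset\pp_F^{\tau(F)}$ and $\pp_G^{\tau(G)}\sect\pp_H^{\tau(H)}=\pp_G^{\tau(G)}\pp_H^{\tau(H)}\subset(\pp_G+\pp_H)^{\tau(G)+\tau(H)}=\pp_F^{\tau(F)}$. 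The descent bookkeeping you flag as the main obstacle is real but routine (induction on $|F|$, since the replacement sets are always proper subsets), and the paper leaves it equally implicit.
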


\begin{proof}
We first show that
$I=\Sect_{F\subset [n]}\pp_F^{\tau(F)},$
where we take the intersection over all subsets $F$ of $[n]$.  Indeed, due to Theorem~\ref{primdec_polym} it suffices to show that $I\subset \pp_F^{\tau(F)}$ for all $F\subset [n]$. In order to show this let $\xb^{\ub}\in G(I)$. Then  $\ub([n])=d$ and $\ub(F)\leq\rho(F)$ for all subsets $F\subset [n]$. Therefore,  $d-\ub(F)=\ub([n]\setminus F)\leq\rho([n]\setminus F)$ and this  implies that $\ub(F)\geq \tau(F)$. In other words, $\xb^{\ub}\in \pp_F^{\tau(F)}$.

In the next step we show that whenever $F$ is not  $\tau$-closed or is  $\tau$-separable, then  $\pp_F^{\tau(F)}$ may be omitted in the intersection $\Sect_{F\subset [n]}\pp_F^{\tau(F)}$ without changing it. Indeed, if $F$ is not  $\tau$-closed, then there exists a proper subset $G$ of $F$ with $\tau(G)=\tau(F)$. It follows that $\pp_G^{\tau(G)}\subset \pp_F^{\tau(F)}$, and hence $\pp_F^{\tau(F)}$ may be omitted. On the other hand, if $F$ is separable, then there exist non-empty subsets  $G$ and $H$ of $F$ with $G\sect H=\emptyset$ and $G\union H=F$ such that $\tau(G)+\tau(H)=\tau(F)$. Thus we see that
\[
\pp_G^{\tau(G)}\sect \pp_H^{\tau(H)}= \pp_G^{\tau(G)} \pp_H^{\tau(H)}\subset (\pp_G+\pp_H)^{\tau(G)+\tau(H)}=\pp_F^{\tau(F)},
\]
and hence again $\pp_F^{\tau(F)}$ may be omitted in the intersection.
\end{proof}

\begin{Example}
{\em The intersection given in Theorem~\ref{redundant} may be an irredundant primary decomposition but also  very far of being irredundant. Consider for example the Veronese type ideal $I_{4;3,2,1}$ analyzed in Example~\ref{known_polym}(a). The rank function $\rho$ of the associated discrete polymatroid is given as follows: $\rho(\emptyset)=0$, $\rho(\{1\})=3$, $\rho(\{2\})=2$, $\rho(\{3\})=1$, $\rho(\{1,2\})=4$, $\rho(\{1,3\})=4$, $\rho(\{2,3\})=3$ and $\rho(\{1,2,3\})=4$. Thus the complementary rank function is the following: $\tau(\emptyset)=0$, $\tau(\{1\})=1$, $\tau(\{2\})=0$, $\tau(\{3\})=0$, $\tau(\{1,2\})=3$, $\tau(\{1,3\})=2$, $\tau(\{2,3\})=1$ and $\tau(\{1,2,3\})=4$. One can easily check that there are no $\tau$-inseparable subsets of $[3]$ and the $\tau$-closed subsets of $[3]$ are: $\{1\}, \{1,2\}, \{1,3\}, \{2,3\},\{1,2,3\}$. Applying now Theorem~\ref{redundant} we obtain that
\[
I_{4;3,2,1}=(x_1)\cap(x_2,x_3)\cap(x_1,x_3)^2\cap(x_1,x_2)^3\cap(x_1,x_2,x_3)^4.
\]
This is the canonical primary decomposition of $I_{4;3,2,1}$ presented already in Example~\ref{known_polym}(a). Thus in this case Theorem~\ref{redundant} yields an irredundant primary decomposition.

On the other hand if we consider the squarefree Veronese ideal $I=I_{n-1;1,\ldots,1}\subset K[x_1,\ldots,x_n]$ we will see that the intersection given in Theorem~\ref{redundant} is very far of being an irredundant primary decomposition. Indeed, the canonical primary decomposition of $I$ is
\[
I=\Sect_{1\leq i<j\leq n} (x_i,x_j).
\]
The rank function $\rho$ of the associated discrete polymatroid of $I$ can be easily computed:
\[
\rho(F)=|F| \quad \text{ for any } F\subsetneq [n],
\]
and $\rho([n])=n-1$. Thus we obtain that $\tau(\emptyset)=0$ and $\tau(F)=|F|-1$ for any nonempty subset $F\subset [n]$. This implies that every set $F\subset [n]$ with $|F|\geq 2$ is $\tau$-closed and thus the intersection given by Theorem~\ref{redundant} is the following
\[
I=\Sect_{i=2}^n (\Sect_{F\subset [n], |F|=i}\pp_F^{i-1}).
\]
}
\end{Example}

\medskip
\noindent
{\em Ideals of Borel type.}  We recall  (see \cite[Definition 2.1]{HPV}) that an {\em ideal of Borel type} is a monomial ideal $I\subset S$ such that
\[
I:x_j^{\infty}=I:(x_1,\ldots,x_j)^\infty \quad \text{ for all } j=1,\ldots,n.
\]
For such an ideal is known that the set $\Ass(S/I)$  is totally ordered. More precisely, if $\pp\in\Ass(S/I)$ then $\pp=(x_1,\ldots,x_j)$ for some $j=1,\ldots,n$,  see \cite[Proposition~4.2.9]{HH}.

The {\em principal Borel ideal} generated by the monomial $u$ is the smallest Borel ideal containing $u$, and is denoted $\langle u\rangle$.

\begin{Proposition}\label{borel_c}
Let $I$ be an ideal of Borel type. The following conditions are equivalent:
\begin{enumerate}
\item[(a)] $I$ is of strong intersection type;
\item[(b)] $I$ is of intersection type;
\item[(c)] $I$ is principal Borel.
\end{enumerate}
\end{Proposition}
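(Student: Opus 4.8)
The plan is to prove the cycle of implications (a) $\implies$ (b) $\implies$ (c) $\implies$ (a). The implication (a) $\implies$ (b) is immediate, since every ideal of strong intersection type is by definition of intersection type. Thus the substantive work lies in proving (b) $\implies$ (c) and (c) $\implies$ (a). I expect the implication (b) $\implies$ (c) to be the main obstacle, because it requires showing that the combinatorial rigidity of a Borel type ideal, combined with the numerical constraint on socle degrees from Theorem~\ref{prime_intersection}, forces the ideal to be generated by a single Borel chain.

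For (c) $\implies$ (a), I would argue that a principal Borel ideal $\langle u\rangle$ has all its monomial localizations with linear resolution. The key structural fact I would invoke is that $I(\pp)$ is again principal Borel for every monomial prime $\pp\in V^*(I)$ of the form $(x_1,\ldots,x_j)$. Since principal Borel ideals are known to have linear quotients (one orders the generators by the usual Borel order and checks that each colon ideal is generated by variables), they have a linear resolution. As $\Ass(S/I)$ consists only of primes of the form $(x_1,\ldots,x_j)$, every $I(\pp)$ for $\pp\in\Ass(S/I)$ has a linear resolution, and Theorem~\ref{bounded} then yields that $I$ is of strong intersection type.

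For (b) $\implies$ (c), the strategy is contrapositive-flavored: assuming $I$ is of intersection type, I would show $I$ must be principal Borel. Using that $\Ass(S/I)$ is totally ordered with primes $\pp_j=(x_1,\ldots,x_j)$, I would reduce to analyzing each localization $I(\pp_j)=I(x_1,\ldots,x_j)$. The condition from Theorem~\ref{prime_intersection}(iii), namely $\min(I(\pp))>\max(\Soc(S(\pp)/I(\pp)))$ for each associated prime, is the leverage point: it says the top socle degree of $S(\pp)/I(\pp)$ lies strictly below the lowest generating degree of $I(\pp)$. The hard part will be translating this degree inequality into the statement that $I$ is generated by a single Borel chain, i.e.\ that $I=\langle u\rangle$ for some monomial $u$. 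I would attempt to show that if $I$ were not principal Borel, then some localization $I(\pp_j)$ would fail this inequality, because the presence of two ``incomparable'' generating patterns produces a socle element in degree at least the minimal generating degree. Concretely, I expect to exhibit a monomial $w\notin I(\pp_j)$ of degree $\geq \min(I(\pp_j))$ with $\mm_{\pp_j}w\subset I(\pp_j)$, contradicting intersection type. Identifying this socle witness from the failure of the principal Borel property is the delicate combinatorial step, and I anticipate it will require a careful induction on $n$ or on the number of variables actually appearing, exploiting the Borel exchange property to locate the offending monomial.
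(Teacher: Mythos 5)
Your implications (a) $\Rightarrow$ (b) and (c) $\Rightarrow$ (a) are sound. For (c) $\Rightarrow$ (a) you take a slightly different route from the paper: you propose to show that every monomial localization of a principal Borel ideal is again principal Borel and hence has linear quotients, whereas the paper simply observes that principal Borel ideals are polymatroidal and quotes Proposition~\ref{polym_c}, which already packages the localization argument. Both work, though yours obliges you to actually verify the (true but unproved) claim that $\langle u\rangle(\pp)$ is principal Borel for the relevant primes $\pp$.

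The genuine gap is in (b) $\Rightarrow$ (c). You describe the intended mechanism---produce a socle element of $S(\pp)/I(\pp)$ in degree at least $\min(I(\pp))$ whenever $I$ fails to be principal Borel---but you never construct it, and you explicitly defer ``the delicate combinatorial step.'' As written this is a plan, not a proof, and it is not clear the plan goes through: you would have to show that for an arbitrary Borel-type ideal that is not principal Borel, the failure is always detected by the socle of some monomial localization, which is exactly the hard content. The paper sidesteps the socle entirely. Hypothesis (b) already hands you an irredundant presentation $I=\bigcap_{i=1}^r\pp_i^{d_i}$ as an intersection of powers of monomial primes; the Borel-type condition forces the primes to form a chain $\pp_1\subset\cdots\subset\pp_r$ with $\pp_i=(x_1,\ldots,x_{n_i})$, and irredundancy then forces $d_1<\cdots<d_r$. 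Setting $u=x_{n_1}^{d_1}x_{n_2}^{d_2-d_1}\cdots x_{n_r}^{d_r-d_{r-1}}$, one has $\langle u\rangle=\pp_1^{d_1}\pp_2^{d_2-d_1}\cdots\pp_r^{d_r-d_{r-1}}$, a transversal polymatroidal ideal whose known irredundant primary decomposition (\cite[Corollary 4.10]{HRV} or \cite[Theorem 4.3]{FMS}) is precisely $\bigcap_{i=1}^r\pp_i^{d_i}$; hence $I=\langle u\rangle$. If you wish to salvage your approach you must supply the socle witness, which I expect is considerably more work than this direct identification.
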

\begin{proof}
(a) \implies (b) is obvious. For (b) \implies (c) assume that $I=\Sect_{i=1}^r \pp_i^{d_i}$ is an irredundant primary decomposition of $I$. Since $I$ is of Borel type it follows from the previous comments that we may assume that $\pp_1\subset\pp_2\subset\cdots\subset\pp_r$. Therefore we must have $d_1<d_2<\cdots<d_r$, otherwise the given primary decomposition would not be irredundant. We claim that $I$ is the principal Borel ideal generated by the monomial $$u:=x_{n_1}^{d_1}x_{n_2}^{d_2-d_1}\cdots x_{n_s}^{d_s-d_{s-1}},$$
where for all $i=1,\ldots,r$, $x_{n_i}$ is the variable of highest index appearing in the minimal system of generators of $\pp_i$. Indeed, note first that  $\langle u\rangle =\pp_1^{d_1}\pp_2^{d_2-d_1}\cdots \pp_s^{d_s-d_{s-1}}$. Thus the  ideal $\langle u\rangle$ is a transversal polymatroidal ideal and applying either \cite[Corollary 4.10]{HRV} or \cite[Theorem 4.3]{FMS} we obtain that $\langle u\rangle =\Sect_{i=1}^r \pp_i^{d_i}$ is the irredundant primary decomposition of $\langle u \rangle$.  Therefore,  $I=\langle u \rangle$. In order to prove the implication (c) \implies (a) we use the fact that a principal Borel ideal is polymatroidal and by Proposition~\ref{polym_c} we obtain the desired conclusion.
\end{proof}

\begin{Remark}
{\em (a) Every monomial ideal of the form $I=\Sect_{i=1}^r \pp_i^{a_i}$ with the property that $\pp_i\not\subset \pp_j$ for all $i\neq j$ is of strong intersection type. In particular all squarefree monomial ideals are of strong intersection type, as we already noticed before.

(b) Let $\pp_1\subset \pp_2\subset\cdots\subset \pp_s$ be a chain of monomial prime ideals in $S$ and $d_1<d_2<\cdots<d_s$ be a sequence of positive integers. Then the ideal $I=\Sect_{i=1}^r \pp_i^{d_i}$ is a principal Borel ideal after a relabeling of the variables and by Proposition~\ref{borel_c}, the ideal  $I$ is of strong intersection type.

}
\end{Remark}


\medskip
\noindent
{\em Powers of edge ideals.} Let $G$ be a simple graph on the vertex set $[n]$, that is,  a graph without loops and multiple edges. The edge ideal associated with $G$ is the monomial ideal $I(G)\subset K[x_1,\ldots,x_n]$ generated by the monomials $x_ix_j$ corresponding to the edges $\{i,j\}$ of $G$. Here we study the question under which conditions on $G$, powers of $I(G)$ are of (strong) intersection type. We recall that edge ideals have the persistence property, that is,  $\Ass(S/I(G)^k)\subset\Ass(S/I(G)^{k+1})$ for all $k\geq 1$. This is shown  in \cite[Theorem 2.15]{MMV}.

\medskip
We begin with some examples.

\begin{Example}{\em
(1) Let $G$ be a graph such that $\Ass(S/I(G))=\Ass(S/I(G)^k)$ for all $k$. Then $I(G)^k$ is of strong intersection type. This situation is given for example in the case that  $G$ is bipartite,  as shown in \cite[Theorem 5.9]{SVV}.

(2) Let $G$ be a graph with the property that $I(G)^k$ has a linear resolution for all $k\geq 2$ and such that $\Ass(S/I^k) \subset \Ass(S/I)\union \{\mm\}$ for all $k$. Then Theorem~\ref{bounded} implies that $I(G)^k$ is of strong intersection type for all $k$.
The 5-cycle  $C_5$ satisfies this condition. Indeed,  it follows from \cite[Theorem 6.12]{B} that $I(C_5)^k$ has a linear resolution for all $k\geq 2$, and from
\cite[Lemma 3.1]{CMS} that $\Ass(S/I(C_5)^k) \subset \Ass(S/I(C_5))\union \{\mm\}$ for all $k$.

(3) Let $G=C_{2k+1}$ be the odd cycle of length $2k+1$ with $k\geq 3$. Since $G^c$ contains an induced $4$-cycle, it follows that $I(G)^s$ has no linear resolution for any $s\geq 1$, see \cite[Proposition 1.8]{NP}. On the other hand it is  known that $\Ass(S/I(G)^s)=\Ass(S/I(G))$ for $s\leq k$ and $\Ass(S/I(G)^{s})=\Ass(S/I(G))\cup\{\mm\}$ for $s>k$, see \cite[Lemma 3.1]{CMS}. It follows that $I(G)^s$ is of strong intersection type for $s\leq k$, and not of strong intersection type for $s>k$. We do not know whether or not $I(G)^s$ is of intersection type for all $s\geq 1$.

}
\end{Example}

\medskip
Let $G$ be a graph. We call a $3$-cycle $C$ of $G$ {\em central} if all vertices of $G$ are neighbors of $C$.

A slight generalization of the \cite[Theorem 2.1]{HH3} is the following lemma, which we shall need in the proof of the next theorem.

\begin{Lemma}\label{depth_2}
Let $G$ be a graph on the vertex set $[n]$  and let $$I\subset R=K[x_1,\ldots,x_n,y_1,\ldots,y_k]$$ be the monomial ideal $I=(y_1,\ldots,y_k,I(G))$, where $I(G)\subset K[x_1,\ldots,x_n]$. The following conditions are equivalent:
\begin{enumerate}
\item[(a)] $\depth(R/I^2)=0$,
\item[(b)] $G$ is a connected graph containing a central $3$-cycle.
\end{enumerate}
If the equivalent conditions hold and $C$ is a central $3$-cycle of $G$ with $V(C)=\{i,j,k\}$, then $x_ix_jx_k+I^2$ is a non-zero socle element of $R/I^2$.
\end{Lemma}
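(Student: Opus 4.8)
The plan is to translate everything into a statement about monomials. Since $R/I^2$ is a finitely generated graded module, $\depth(R/I^2)=0$ holds if and only if $\Soc(R/I^2)\neq 0$, i.e. if and only if there is a monomial $w\notin I^2$ with $x_\ell w\in I^2$ for all $\ell$ and $y_a w\in I^2$ for all $a$. Throughout I would use the expansion $I^2=(y_1,\dots,y_k)^2+(y_1,\dots,y_k)I(G)+I(G)^2$, so that a monomial lies in $I^2$ precisely when it is divisible by some $y_ay_b$, by $y_a$ times an edge $x_cx_d$, or by a product $(x_cx_d)(x_ex_f)$ of two edges. In particular a monomial of $I^2$ that involves no $y_a$ must be a multiple of a product of two edges, hence have degree at least $4$.

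For (b)$\Rightarrow$(a), and simultaneously for the final assertion, I would check by hand that $w=x_ix_jx_k$ is a socle element whenever $C$ with $V(C)=\{i,j,k\}$ is a central $3$-cycle. It is not in $I^2$, since it is a squarefree monomial of degree $3$ and so fails both alternatives above. For each $a$ we have $y_ax_ix_jx_k=y_a(x_ix_j)x_k\in (y_1,\dots,y_k)I(G)$. For each vertex $\ell$ I would write $x_\ell x_ix_jx_k$ as a multiple of a product of two edges: if $\ell\in\{i,j,k\}$ using two edges of $C$ (for instance $x_i^2x_jx_k=(x_ix_j)(x_ix_k)$), and if $\ell\notin V(C)$ using an edge $\{\ell,i\}$ joining $\ell$ to $C$ together with the opposite edge $\{j,k\}$ of $C$; such an edge $\{\ell,i\}$ exists exactly because $C$ is central.

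The real content is (a)$\Rightarrow$(b). Starting from an arbitrary socle monomial $w$, I would first show it is a pure $x$-monomial. If the $y$-degree of $w$ is at least $2$ then $w\in (y_1,\dots,y_k)^2\subset I^2$; if it equals $1$, say $w=y_a\xb^{\ub}$, then one checks that for $\ell$ in the support of $\ub$ the product $x_\ell w$ cannot lie in $I^2$, contradicting the socle condition. Writing then $w=\xb^{\ub}$ with support $V$, the three socle requirements become: $\xb^{\ub}\notin I(G)^2$; $\xb^{\ub}\in I(G)$, i.e. $V$ contains an edge (this is the only place the variables $y_a$ enter, through $y_aw\in I^2$); and $x_\ell\xb^{\ub}\in I(G)^2$ for every vertex $\ell$.

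The combinatorial heart is then to pin down $w$. Because $\xb^{\ub}\notin I(G)^2$, no two vertex-disjoint edges of $G$ lie in $V$, so the edges of $G$ inside $V$ form a pairwise intersecting family; by the classical dichotomy such a family is either a star or a triangle. I would rule out the star: a short analysis of the exponents shows that $\xb^{\ub}\notin I(G)^2$ forces the multiplicities at the star vertices to be so small that $x_\ell\xb^{\ub}\in I(G)^2$ fails for a suitable $\ell$ (the centre, or a leaf). Hence the edges inside $V$ form a triangle $T=\{i,j,k\}$, and the no-disjoint-edges constraint shows these are the only edges inside $V$. Then $\xb^{\ub}\notin I(G)^2$ forces $u_i=u_j=u_k=1$, and applying $x_\ell\xb^{\ub}\in I(G)^2$ to any vertex of $V$ isolated in the induced subgraph on $V$ rules such vertices out, so $V=T$ and $w=x_ix_jx_k$. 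Finally, for $\ell\notin T$ the condition $x_\ell x_ix_jx_k\in I(G)^2$ demands two disjoint edges inside $\{i,j,k,\ell\}$, which, $T$ being a triangle, can only be $\{\ell,t\}$ together with the edge of $T$ opposite $t$; thus every vertex is adjacent to $T$, so $T$ is central and $G$ is connected. I expect the main obstacle to be exactly this last bookkeeping: controlling the exponents through the star/triangle dichotomy so as to conclude that the unique possible socle monomial is the squarefree degree-$3$ monomial supported on a central triangle, which is also what makes the argument a genuine refinement of \cite[Theorem 2.1]{HH3}.
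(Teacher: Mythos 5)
Your proof is correct, but it takes a genuinely different and more self-contained route than the paper. The paper treats the lemma as a perturbation of the known case $k=0$: it quotes \cite[Corollary 1.2]{HH3} to know that a socle monomial $u$ of $R/I^2$ is squarefree, shows by a short divisibility argument that $u$ involves no $y_a$ (if $y_1\mid u$ then no two indices in the $x$-part of $u$ form an edge, whence $ux_{i_1}\notin I^2$), and then invokes \cite[Theorem 2.1]{HH3} --- which is exactly the statement for $I(G)^2\subset K[x_1,\ldots,x_n]$ --- to produce the central $3$-cycle; the converse direction likewise borrows the socle element $x_1x_2x_3$ from the proof of that theorem. You instead re-derive the entire classification of socle monomials from scratch: the reduction to a pure $x$-monomial, the translation of the three socle conditions, and the pairwise-intersecting-edges dichotomy (star versus triangle) together with the exponent bookkeeping that eliminates the star and pins the socle element down to the squarefree monomial on a central triangle. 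I checked that bookkeeping and it holds up: for a star with at least two leaves the centre is forced to have exponent $1$ and then a leaf $\ell$ fails $x_\ell\xb^{\ub}\in I(G)^2$, while a single edge imposes incompatible exponent demands at its two endpoints. Your approach buys independence from \cite{HH3} (in effect it reproves both results cited there) at the cost of redoing that combinatorics; note, though, that your step forcing an edge inside the support uses $y_aw\in I^2$ and hence $k\geq 1$. The only loose end is the degenerate monomial $w=y_a$ in your $y$-degree-one case, where the support of $\ub$ is empty and you need the separate one-line remark that $x_1y_a\notin I^2$ --- the same point the paper dispatches with its parenthetical ``$s\geq 1$''.
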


\begin{proof}
(a)$\Rightarrow$(b): Since $\depth(R/I^2)=0$, there exists a monomial $u\notin I^2$ such that $u\cdot(x_1,\ldots,x_n,y_1,\ldots,y_k)\subset I^2$.  By \cite[Corollary 1.2]{HH3} we know that $u$ is a squarefree monomial. We claim that $u$ is not divisible by any $y_j$. Assuming the claim proved, it follows
that $u\notin I(G)^2$ and $u\cdot(x_1,\ldots,x_n)\subset I(G)^2$. Therefore $\depth(K[x_1,\ldots,x_n]/I(G)^2)=0$, and by \cite[Theorem 2.1.]{HH3} we obtain the desired conclusion.

In order to prove the claim we argue by contradiction. Without loss of generality we may assume that $y_1$ divides $u$. Since $u\notin I^2$ it follows that $u$ can not be further divisible by any $y_i$. Thus $u=x_{i_1}\cdots x_{i_s}y_1$ for some integers $1\leq i_1<\ldots<i_s\leq n$ with $s\geq 1$ because $y_1$ does not belong to the socle of $R/I^2$. Furthermore for every integers $p\neq q$ we have $\{i_p,i_q\}$ is not an edge of $G$, otherwise $u\in y_1I(G)$, a contradiction. This implies that $ux_{i_1}$ is not divisible by any of the minimal generators of $y_1I(G)$ or $I(G)^2$ and therefore $ux_{i_1}\notin I^2$, a contradiction. Hence our claim is proved and we are done.

(b)$\Rightarrow$(a): After a relabeling of the vertices of $G$ we may assume that the $3$-cycle has the edges $\{1,2\},\{1,3\}$ and $\{2,3\}$. It follows from the proof of \cite[Theorem 2.1]{HH3} that the monomial $u=x_1x_2x_3$ satisfies $u\notin I(G)^2$ and $u\in I(G)^2:(x_1,\ldots,x_n)$. It follows that $u\notin I^2$. Moreover, since $uy_i\in y_iI(G)\subset I^2$ for all $i=1,\ldots,k$ it follows that $u\cdot(x_1,\ldots,x_n,y_1,\ldots,y_k)\subset I^2$, and thus $\depth(R/I^2)=0$.
\end{proof}

We want to point out that the proof of Lemma~\ref{depth_2} shows  that if $\depth(R/I^2)=0$ then there exists a socle element of degree $3$.

\begin{Theorem}
\label{depth_2power}
Let $G$ be a graph on the vertex set $[n]$ with no isolated vertices, and let  $I=I(G)$ be the  edge ideal of $G$ in $S= K[x_1,x_2,\ldots,x_n]$. The following conditions are equivalent:
\begin{enumerate}
\item[(a)] $I^2$ is of intersection type,
\item[(b)] $\Ass(S/I^2)\subset \Ass(S/I)\cup\{\mm\}$,
\item[(c)] All $3$-cycles of the graph $G$ are central.
\end{enumerate}
\end{Theorem}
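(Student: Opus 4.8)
The plan is to establish the cycle of implications (c)\,\implies\,(b)\,\implies\,(a)\,\implies\,(c), reading off intersection type from Theorem~\ref{prime_intersection}(a)(iii) and detecting associated primes of $I^2$ by localizing and applying Lemma~\ref{depth_2}. The common first step is a description of the localizations. Let $\pp_F$ be a monomial prime containing $I^2$, equivalently let $F$ be a vertex cover of $G$, and write $W=\{i\in F: i\text{ has a neighbor outside }F\}$ and $F'=F\setminus W$. Since localization commutes with products, $I^2(\pp_F)=I(\pp_F)^2$, and evaluating the edge generators under $\varphi$ gives $I(\pp_F)=(x_i:i\in W)+I(G[F'])$. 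This is precisely the ideal of Lemma~\ref{depth_2}, with the variables $\{x_i:i\in W\}$ in the role of $y_1,\dots,y_k$ and the graph $G[F']$ in the role of $G$. Hence for $F'\neq\emptyset$ we have $\pp_F\in\Ass(S/I^2)$ if and only if $G[F']$ is connected with a central $3$-cycle, and the remark after Lemma~\ref{depth_2} then provides a socle element $x_ax_bx_c$ of degree $3$ in $S(\pp_F)/I^2(\pp_F)$. The degenerate case $F'=\emptyset$ says every vertex of $F$ has a neighbor outside $F$, i.e. $F$ is a minimal vertex cover, so $I(\pp_F)=\mm_{\pp_F}$ and $\pp_F\in\Ass(S/I)$.

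For (c)\,\implies\,(b) I take $\pp_F\in\Ass(S/I^2)$ with $\pp_F\neq\mm$ and show $\pp_F\in\Ass(S/I)$. If $F'=\emptyset$ this is the degenerate case above. If $F'\neq\emptyset$, the central $3$-cycle $C\subseteq F'$ of $G[F']$ is in particular a $3$-cycle of $G$, so by (c) it is central in $G$; but then any vertex $u\notin F$ is adjacent to some $w\in C\subseteq F'$, contradicting that $w$ has no neighbor outside $F$. Hence $F=[n]$ and $\pp_F=\mm$, which is excluded. Thus $\Ass(S/I^2)\subseteq\Ass(S/I)\cup\{\mm\}$.

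For (b)\,\implies\,(a) I check Theorem~\ref{prime_intersection}(a)(iii) at each $\pp\in\Ass(S/I^2)\subseteq\Ass(S/I)\cup\{\mm\}$. At a minimal cover prime $\pp_C$ one has $I^2(\pp_C)=\mm_{\pp_C}^2$, so $\min(I^2(\pp_C))=2>1=\max(\Soc(S(\pp_C)/\mm_{\pp_C}^2))$. At $\mm$ one has $\min(I^2)=4$, and I bound the socle combinatorially: by \cite[Corollary 1.2]{HH3} a socle element $u$ of $S/I^2$ is squarefree, and $u\notin I^2$ forces the graph induced on $\supp(u)$ to have matching number at most $1$, so it is a star or a triangle together with isolated vertices; if $\deg u\geq 4$ then multiplying $u$ by a suitable variable of its support yields a monomial again outside $I^2$, contradicting that $u$ is a socle element. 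Hence $\max(\Soc(S/I^2))\leq 3<4$, the inequality holds at $\mm$ as well, and $I^2$ is of intersection type.

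The implication (a)\,\implies\,(c) is the core, and I prove its contrapositive: from a non-central $3$-cycle I build an associated prime at which intersection type fails. Let $C=\{a,b,c\}$ be a $3$-cycle and $v\notin N[C]$, let $D$ be a \emph{minimal} vertex cover of the induced graph $G[[n]\setminus N[C]]$, and set $F=N[C]\cup D$. Then $F$ is a vertex cover; $a,b,c\in F'$ since all their neighbors lie in $N[C]\subseteq F$; every vertex of $F$ either is adjacent to $C$ or lies in $D$, and each vertex of $D$ has a neighbor outside $F$ by minimality of $D$, hence lies in $W$. A direct check shows $x_ax_bx_c$ is a nonzero socle element of $S(\pp_F)/I^2(\pp_F)$ (for $i\in\{a,b,c\}$ use two edges of $C$; for $i$ adjacent to $C$ pair the connecting edge with the opposite edge of $C$; for $i\in D\subseteq W$ use the linear generator $x_i$), so $\pp_F\in\Ass(S/I^2)$ and $\max(\Soc)\geq 3$. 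The decisive point is that $W\neq\emptyset$: either $D\neq\emptyset$, or $[n]\setminus N[C]$ is independent and then the neighbor of $v$ lies in $N[C]$ and witnesses a vertex of $W$. Therefore $I(\pp_F)$ has a degree-one generator, $\min(I^2(\pp_F))=2<3\leq\max(\Soc(S(\pp_F)/I^2(\pp_F)))$, and Theorem~\ref{prime_intersection}(a) shows $I^2$ is not of intersection type. The main obstacle is exactly this construction: choosing $F$ so that the central $3$-cycle persists in $G[F']$ while simultaneously forcing $W\neq\emptyset$, which is what lowers $\min(I^2(\pp_F))$ to $2$ and breaks the required inequality.
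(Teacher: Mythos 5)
Your proof is correct, and it runs the cycle of implications in the opposite orientation, (c)\,\implies\,(b)\,\implies\,(a)\,\implies\,(c), from the paper's (a)\,\implies\,(b)\,\implies\,(c)\,\implies\,(a); consequently each implication you prove is the composite of two of the paper's. The technical core is the same: your uniform description $I(\pp_F)=(x_i:i\in W)+I(G[F'])$ is the paper's $I(\pp)=Q+I(H)$, and both arguments reduce everything to Lemma~\ref{depth_2} together with the criterion of Theorem~\ref{prime_intersection}(a)(iii). Two points where you genuinely diverge are worth recording. First, in (a)\,\implies\,(c) you localize at $\pp_F$ with $F=N[C]\cup D$, where $D$ is a \emph{minimal vertex cover} of the induced graph on $[n]\setminus N[C]$; the paper's corresponding step ((b)\,\implies\,(c)) takes $F$ to be just the set of neighbors of $C$, which tacitly requires the non-neighbors of $C$ to span no edges (otherwise such an edge localizes to $1$ and $I(\pp)=S(\pp)$, breaking the decomposition $I(\pp)=I(H)+Q$). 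Your augmentation by $D$ makes $F$ a vertex cover, keeps $a,b,c$ in $F'$ so the triangle survives, and your separate argument that $W\neq\emptyset$ still forces a linear generator; this is more robust than the printed argument. Second, in (b)\,\implies\,(a) you replace the paper's appeal to the internal proof of \cite[Theorem 2.1]{HH3} by a short self-contained matching-number argument for the bound $\max(\Soc(S/I^2))\leq 3$, and you verify condition (iii) of Theorem~\ref{prime_intersection} directly at the minimal primes rather than routing through the triangle condition. The cost of the two routes is comparable: yours buys a cleaner and more careful treatment of the localization at a non-central triangle, while the paper's buys slightly shorter individual implications.
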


\begin{proof}
We prove the theorem by showing that (a)$\Rightarrow$(b)$\Rightarrow$(c)$\Rightarrow$(a).

(a)$\Rightarrow$(b): Since $\Ass(S/I)=\Min(S/I)\subset\Ass(S/I^k)$ for all $k\geq 1$, it follows that $\Ass(S/I)\subset\Ass(S/I^2)$. Assume now that there exists a monomial prime ideal $\pp\in\Ass(S/I^2)$ which does not belong to $\Ass(S/I)\cup\{\mm\}$. If $I(\pp)$ is only generated by variables, then $I(\pp)=\pp S(\pp)$, because otherwise  $\depth(S(\pp)/I(\pp)^2)>0$. It follows that   $\pp\in\Ass(S/I)$, a contradiction. Therefore, $I(\pp)$ contains also generators of degree $2$, and we may apply Lemma~\ref{depth_2}, and  conclude  that $\max(\Soc(S(\pp)/I(\pp)^2))\geq 3$. On the other hand,  by the assumption on $G$, each variable $x_i$ divides some generator of $I$.  Since at least one variable is mapped to 1 in $S(\pp)$, it follows that  $I(\pp)$ contains also variables, and hence  $\min(I(\pp)^2)=2$.  By  Theorem~\ref{prime_intersection} this contradicts our assumption that $I^2$ is of intersection type.

(b)$\Rightarrow$(c): Suppose that there exists a $3$-cycle $C$ of $G$  which is not central. Let $D$ be the set of vertices of $G$ which are not neighbors of $C$, and let $\pp$ be the monomial prime ideal generated by the variables $x_i$ with $i\not\in  D$.  Then $I(\pp)=I(H)+Q$ where $Q$ is generated by a set $X$ of variables,  and where $I(H)$ is an edge ideal in a set of variables disjoint from $X$ with  $H$ a connected graph containing the $3$-cycle $C$ which is central in $H$. Lemma~\ref{depth_2} implies that $\depth(S(\pp)/I(\pp)^2)=0$. It follows that $\pp\in \Ass(S/I^2)$. The prime ideal $\pp$ is not a  minimal prime ideal of $I$  since it contains monomial generators of $I$ of degree 2, and it is not maximal since $D\neq \emptyset$. This is a contradiction.

(c)$\Rightarrow$(a): Let $\pp\neq \mm$ be a monomial prime ideal containing $I^2$, and write  $I(\pp)=I(H)+Q$ as in the proof of (b)\implies (c).  This time however,  since all $3$-cycles of $G$  are central, it follows that $H$ contains no $3$-cycle at all. Thus from Lemma~\ref{depth_2} one deduces that $\depth(S(\pp)/I(\pp)^2)=0$ if and only if $I(\pp)=\pp S(\pp)$, in which case $\pp$ is a minimal prime ideal. On the other hand, if $\mm\in \Ass(S/I^2)$, then $\depth(S/I^2)=0$. By \cite[Corollary 1.2]{HH3},  $\Soc(S/I^2)$ is generated by elements of the form $u+I^2$ where $u$ is a squarefree monomial. Let $u+I^2$ be such a non-zero socle element, and let $H$ be the induced subgraph of $G$ whose vertex set is the support of $u$. In the proof of \cite[Theorem  2.1, (a)\implies (b)]{HH3} it is shown that $H$ is either a 3-cycle  or a line of length at most 2. It follows that $\deg u \leq 3$. Hence we have shown that $\max(\Soc(S/I^2))\leq 3$. Since $I^2$ is generated in degree $4$, Theorem~\ref{prime_intersection} yields  the desired conclusion.
\end{proof}

\begin{Corollary}
\label{higher}
Let $I$ be an edge ideal such that $I^2$ is not of intersection type. Then  for any $k\geq 2$,  the ideal $I^k$ is not of intersection type.
\end{Corollary}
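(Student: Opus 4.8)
The plan is to exhibit, for each $k\geq 2$, a single associated prime $\pp$ of $S/I^k$ that fails the numerical criterion of Theorem~\ref{prime_intersection}(a)(iii): one with $\min(I(\pp)^k)\leq\max(\Soc(S(\pp)/I(\pp)^k))$. Throughout I may assume $G$ has no isolated vertices, since these affect neither $I^k$ nor its associated primes, so that Theorem~\ref{depth_2power} applies.

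First I would fix the prime once and for all from the $k=2$ situation. Since $I^2$ is not of intersection type, the equivalence (a)$\iff$(b) of Theorem~\ref{depth_2power} yields a prime $\pp\in\Ass(S/I^2)$ with $\pp\neq\mm$ and $\pp\notin\Ass(S/I)$. By the persistence property $\Ass(S/I^2)\subset\Ass(S/I^k)$, so $\pp\in\Ass(S/I^k)$, i.e.\ $\depth(S(\pp)/I(\pp)^k)=0$ and hence $\Soc(S(\pp)/I(\pp)^k)\neq 0$ for every $k\geq 2$.

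Next I would unwind the localization. Let $P$ be the set of variables generating $\pp$ and $D=[n]\setminus P$. From $I\subset\pp$ it follows that $D$ is an independent set of $G$, so $I(\pp)=I(G[P])+Q$, where $Q$ is generated by those variables $x_a$, $a\in P$, having a neighbor in $D$. Two properties of $Q$ are decisive. First, $Q\neq 0$: since $\pp\neq\mm$ we have $D\neq\emptyset$, any $v\in D$ is non-isolated, and its neighbor lies in $P$ because $D$ is independent; hence $\min(I(\pp))=1$ and $\min(I(\pp)^k)=k$. Second, $Q\neq\mm_\pp$: were $Q=\mm_\pp$ we would get $I(\pp)=\mm_\pp$ and therefore $\pp\in\Ass(S/I)$, contrary to the choice of $\pp$. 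Thus there is a variable $x_{a_0}$ of $S(\pp)$ not occurring among the generators of $Q$.

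The key step is a lower bound on the socle degree. Every minimal generator of $I(\pp)^k$ is a product of $k$ generators of $I(\pp)$, each of degree at least $1$, so has degree at least $k$; and in degree exactly $k$ such a product uses only the degree-$1$ generators, whence every degree-$k$ monomial of $I(\pp)^k$ lies in $Q^k$. Now let $u+I(\pp)^k$ be a nonzero socle element. For every variable $x_v$ the monomial $x_v u$ lies in $I(\pp)^k$, hence is divisible by a generator of degree at least $k$, giving $\deg u\geq k-1$; and $\deg u=k-1$ is impossible, for then $x_{a_0}u$ would be a degree-$k$ monomial of $I(\pp)^k$, hence lie in $Q^k$, yet it involves $x_{a_0}\notin\supp(Q)$. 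Therefore $\deg u\geq k$, so $\max(\Soc(S(\pp)/I(\pp)^k))\geq k=\min(I(\pp)^k)$, and Theorem~\ref{prime_intersection} shows that $I^k$ is not of intersection type. I expect this final degree bound to be the only real obstacle: producing the off-$Q$ variable $x_{a_0}$ from $\pp\notin\Ass(S/I)$ and using it to exclude socle degree $k-1$ is the heart of the matter, while everything else is routine manipulation of monomial localizations together with persistence.
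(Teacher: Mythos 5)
Your proof is correct, but it takes a genuinely different route from the paper's. The paper goes back to the non-central $3$-cycle $C$ supplied by Theorem~\ref{depth_2power}, forms the prime $\pp$ for which $C$ becomes central in $I(\pp)=I(H)+Q$, and then \emph{lifts} the explicit degree-$3$ socle element $x_1x_2x_3$ of $S(\pp)/I(\pp)^2$ to the explicit element $(x_1x_2)^{k-2}x_1x_2x_3$ of degree $2k-1$ in $\Soc(S(\pp)/I(\pp)^k)$, checking by a degree-and-support argument that it is not in $I(\pp)^k$; comparison with $\min(I(\pp)^k)=k$ then finishes the proof. You instead take any $\pp\in\Ass(S/I^2)\setminus(\Ass(S/I)\cup\{\mm\})$, invoke persistence to place it in $\Ass(S/I^k)$, and prove a \emph{lower bound on the degree of an arbitrary nonzero monomial socle element}: since the degree-one part of $I(\pp)$ is a nonzero proper set $Q$ of the variables, every degree-$k$ monomial of $I(\pp)^k$ is supported on $\supp(Q)$, and multiplying a putative degree-$(k-1)$ socle element by a variable outside $\supp(Q)$ rules that degree out. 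Your argument is softer --- it never constructs a socle element and only uses that the socle is nonzero --- and in effect it establishes the implication ``$\Ass(S/I^k)\not\subset\Min(S/I)\cup\{\mm\}$ implies $I^k$ is not of intersection type'' for every $k$, i.e.\ the analogue of (a)$\Rightarrow$(b) of Theorem~\ref{depth_2power} for all powers, which is of some independent interest. What it gives up is the sharper quantitative information of the paper's proof, namely the explicit socle element showing $\max(\Soc(S(\pp)/I(\pp)^k))\geq 2k-1$ rather than merely $\geq k$. Both proofs rest on Theorem~\ref{prime_intersection}, Theorem~\ref{depth_2power} and the persistence property (the paper uses persistence implicitly only through the structure of the chosen $\pp$, you use it explicitly), so neither is more elementary in its inputs; yours is shorter at the key step, the paper's is more constructive.
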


\begin{proof}
Let $I=I(G)$. Since $I^2$ is not of intersection type, Theorem~\ref{depth_2power} implies that there  exists a $3$-cycle $C$ contained in $G$ which is not central. As in the proof of Theorem~\ref{depth_2power}(b)\implies (c) it follows that there exists a monomial prime ideal $\pp\neq \mm$ such that $I(\pp)=I(H)+Q$ where  $H$ is a connected graph containing the $3$-cycle $C$ which now is central in $H$, and where  $Q$ is generated in a disjoint set of variables, say $I(H)\subset K[x_1,\ldots,x_k]$ and $Q=(x_{k+1},\ldots,x_\ell)$. Furthermore we may assume that $V(C)=\{1,2,3\}$. It follows from Lemma~\ref{depth_2} that $u+I(\pp)^2$ with $u=x_1x_2x_3$ is a non-zero socle element of $S(\pp)/I(\pp)^2$. This implies that $(x_1x_2)^{k-2}u+I(\pp)^{k}\in \Soc(S(\pp)/I(\pp)^k).$

Assume that $(x_1x_2)^{k-2}u\in I(\pp)^k$. Then $(x_1x_2)^{k-2}u\in I(H)^{k-s}Q^s$ for some $s$ with $0\leq s\leq k$. Since $(x_1x_2)^{k-2}u$ is not divisible by any $x_i\in Q$, it follows that $s=0$, so that $(x_1x_2)^{k-2}u\in I(H)^k$. This is a contradiction, since $\deg (x_1x_2)^{k-2}u=2k-1$, while the generators of $I(H)^k$ are of degree $2k$.

We conclude that  $(x_1x_2)^{k-2}u+I(\pp)^{k}$ is a non-zero socle element of $S(\pp)/I(\pp)^k$, and hence $\max(\Soc(S(\pp)/I(\pp)^k)\geq 2k-1$. On the other hand $\min(I(\pp)^k)=k$. Thus Theorem~\ref{prime_intersection} implies that $I^k$ is not of intersection type.
\end{proof}

\begin{Examples}
{\em (1) A simple example to which Corollary~\ref{higher} applies is the following: let $I=(xy,xz,yz,zt,tu)$ be the edge ideal of the graph $G$ consisting of the  $3$-cycle $\{x,y,z\}$ and the edges $\{z,t\}$ and  $\{t,u\}$. The $3$-cycle $\{x,y,z\}$ is not central. Therefore by Theorem~\ref{depth_2power}, $I^2$ is not of intersection type,  and thus  by   Corollary~\ref{higher}, $I^k$ is not of intersection type for all $k\geq 2$.

(2) Let $I=I(G)=(xy,yz,xz,yt,zt,tu)$ be the edge ideal of the graph $G$. Note that $\Ass(S/I^2)=\Ass(S/I)\cup\{(x,y,z,t),\mm\}$ and  that the cycle $\{x,y,z\}$ is not central. Since $G^c$ is chordal, \cite[Theorem 10.2.6]{HH} implies that  $I^k$ has a linear resolution for all $k\geq 1$. On the other hand, it follows from Theorem~\ref{depth_2power} and Corollary~\ref{higher} that $I^k$ is not of intersection type for all $k\geq 2$.}
\end{Examples}

\section{Some properties of monomial ideals of (strong) intersection type}

We cannot expect that ideals of (strong) intersection type  have, compared with arbitrary monomial ideals, much better properties, since for example any squarefree monomial ideal is of strong intersection type. However we have

\begin{Proposition}\label{integrally_closed}
Monomial ideals of intersection type are integrally closed.
\end{Proposition}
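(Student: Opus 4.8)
The plan is to reduce the statement to a well-known fact about monomial prime powers, namely that each power $\pp^{d}$ of a monomial prime ideal is integrally closed, and then to exploit the fact that the integral closure operation on ideals behaves well with respect to intersections. Recall that for monomial ideals the integral closure admits a concrete combinatorial description: $\overline{I}$ is the monomial ideal whose exponent vectors are exactly the lattice points of the Newton polyhedron $\con(\{\ub : \xb^{\ub}\in I\})+\RR_{\geq 0}^n$. In particular a monomial $\xb^{\ub}$ lies in $\overline{I}$ if and only if $\xb^{c\ub}\in I^{c}$ for some $c\geq 1$.

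First I would record the key structural input from the first section: by Theorem~\ref{prime_intersection}(b), if $I$ is of intersection type then it has a presentation
\[
I=\Sect_{\pp\in\Ass(S/I)}\pp^{d_\pp}
\]
as an intersection of powers of monomial prime ideals. Thus it suffices to show that an arbitrary finite intersection of powers of monomial primes is integrally closed. The strategy then splits into two steps. The first step is to verify that each individual factor $\pp^{d_\pp}$ is integrally closed. For a monomial prime $\pp=(x_i : i\in F)$, the ideal $\pp^{d}$ is generated by all monomials $\xb^{\ub}$ with $\ub(F)\geq d$, so its Newton polyhedron is cut out by the single inequality $\ub(F)\geq d$ together with $\ub\geq 0$, and this region already contains all its own lattice points as exponents of generators; hence $\overline{\pp^{d}}=\pp^{d}$. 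The second step is to combine the factors: the general principle here is that the intersection of integrally closed ideals is again integrally closed. This follows directly from the valuative (or Newton-polyhedral) characterization, since $\xb^{\ub}\in\overline{\Sect_j J_j}$ forces $\xb^{c\ub}\in\bigl(\Sect_j J_j\bigr)^{c}\subset\Sect_j J_j^{c}$ for some $c$, whence $\xb^{\ub}\in\overline{J_j}=J_j$ for each $j$, so that $\xb^{\ub}\in\Sect_j J_j$.

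Putting the two steps together gives $\overline{I}=\overline{\Sect_\pp \pp^{d_\pp}}=\Sect_\pp\overline{\pp^{d_\pp}}=\Sect_\pp\pp^{d_\pp}=I$, as desired.

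I expect the main obstacle to be making the two reduction steps clean rather than hard: the genuine content is entirely captured by the two lemmas ``powers of monomial primes are integrally closed'' and ``intersections of integrally closed ideals are integrally closed.'' Neither is deep, but one must be careful to phrase the argument for arbitrary intersections correctly using the Newton-polyhedron description and the containment $(\Sect_j J_j)^{c}\subset\Sect_j J_j^{c}$, since this is the only place where a subtlety could creep in. Once the combinatorial description of integral closure for monomial ideals is invoked, the proof is essentially immediate, and in fact the integral-closure property depends only on the form $\Sect\pp^{d_\pp}$ and not on the particular bounds $d_\pp$ coming from the canonical primary decomposition.
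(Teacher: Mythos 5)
Your proposal is correct and is essentially the paper's argument: the paper also uses the monomial characterization of integral closure ($u^t\in I^t$ for some $t$) and deduces $u\in\pp^{d_\pp}$ from the linear inequality $\sum_{x_i\in\pp}ta_i\geq td_\pp$, which is exactly your observation that powers of monomial primes are integrally closed. The only difference is presentational: the paper runs the computation inline for a single monomial, while you factor it into the two lemmas (prime powers are integrally closed; intersections of integrally closed ideals are integrally closed), and both routes rest on the same containment $(\Sect_j J_j)^c\subset\Sect_j J_j^c$.
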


\begin{proof}
Let $I=\Sect_{\pp\in \Ass(S/I)}\pp^{d_{\pp}}$, and let $u=x_1^{a_1}x_2^{a_2}\cdots x_n^{a_n}$ be a monomial with $u^t\in I^t$ for some $t>0$. Then $u^t\in \pp^{td_\pp}$ for all $\pp\in \Ass(S/I)$. Thus for $\pp\in \Ass(S/I)$, we see  that $\sum_{i, x_i\in\pp}ta_i\geq td_\pp$. Therefore, $\sum_{i, x_i\in\pp}a_i\geq d_\pp$ which implies that $u\in \pp^{d_\pp}$. It follows that $u\in I$, and this proves that $I$ is integrally closed, see \cite[Theorem 1.4.2]{HH}.
\end{proof}

\begin{Corollary}\label{normality}
If $I^k$ is of intersection type for all $k\geq 1$,  then $I$ is normal.
\end{Corollary}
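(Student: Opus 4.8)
The plan is to observe that normality is, by definition, the statement that every power of $I$ is integrally closed, so the corollary will follow at once by combining the hypothesis with Proposition~\ref{integrally_closed}. Recall that a monomial ideal $I$ is called \emph{normal} if $\overline{I^k}=I^k$ for all $k\geq 1$, where $\overline{I^k}$ denotes the integral closure of $I^k$.

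First I would invoke the hypothesis directly: $I^k$ is of intersection type for every $k\geq 1$. Then I would apply Proposition~\ref{integrally_closed} to each power separately, concluding that $I^k$ is integrally closed, that is, $\overline{I^k}=I^k$, for every $k\geq 1$. Since this holds for all powers simultaneously, it is precisely the definition of $I$ being normal, and the proof is complete.

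There is no genuine obstacle here; the corollary is an immediate consequence of Proposition~\ref{integrally_closed} together with the definition of normality. The only point worth making explicit is that the proposition is being applied to each $I^k$ individually, which is legitimate because the hypothesis supplies intersection type for all powers at once rather than merely for $I$ itself.
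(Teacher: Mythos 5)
Your argument is correct and is exactly the intended one: the paper states this corollary without proof precisely because it follows immediately by applying Proposition~\ref{integrally_closed} to each power $I^k$ and invoking the definition of normality as integral closedness of all powers. Nothing is missing.
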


Not all integrally closed ideals are of intersection type. The simplest such example is $I=(x,y^2)$.

\medskip
For a monomial ideal $I\subset K[x_1,\ldots,x_n]$ we denote by $\con(I)$ the convex hull in $\RR^n$ of the  set $\{\ab\:\, \xb^\ab\in I\}$. The set $\con(I)$ is called the {\em Newton polyhedron} of $I$. It is well-known that $I$ is integrally closed if and only if $I$ is generated by the monomials $\xb^\ab$ with $\ab\in \con(I)$.

For a monomial ideal $I$ of intersection type, the supporting hyperplanes of $\con(I)$ can be easily described.  For a subset $F\subset [n]$ we let, as before,  $\pp_F$ be the monomial prime ideal generated by the variables  $x_i$ with $i\in F$. Associated with $\pp_F^a$ we consider the  hyperplane
\[
H_{F,a}=\{\bold{\xi}\in \RR^n\:\ \sum_{i\in F}\xi_i=a\}.
\]
It is clear that $\xb^{\cb}\in \pp_F^a$ if and only if $\cb$ belongs to the half space
\[
H_{F,a}^+=\{\bold{\xi}\in \RR^n\:\ \sum_{i\in F}\xi_i\geq a\}.
\]

Thus the following corollary follows directly from  the definition of ideals  of intersection type.

\begin{Corollary}
\label{supporting}
Let $I$ be a monomial ideal of  intersection type, and let
$$I=\Sect_{\pp_F\in \Ass(S/I)}\pp_F^{d_F}$$ be the canonical  primary decomposition of $I$. Then the hyperplanes $H_{F,d_F}$ with $\pp_F\in \Ass(S/I)$ together with the hyperplanes $H_i=\{\bold{\xi}\in \RR^n\:\; \xi_i\geq 0\}$ for which $(x_i)\not\in \Ass(S/I)$ are the supporting hyperplanes of $\con(I)$.
\end{Corollary}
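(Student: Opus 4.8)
The plan is to establish the claimed supporting hyperplanes of $\con(I)$ by combining two facts: first, that $I$ of intersection type is integrally closed (Proposition~\ref{integrally_closed}), so its Newton polyhedron $\con(I)$ fully determines $I$ via $I=(\xb^\ab\:\ \ab\in\con(I)\cap\NN^n)$; and second, that the canonical primary decomposition $I=\Sect_{\pp_F\in\Ass(S/I)}\pp_F^{d_F}$ translates directly into an intersection of half-spaces. Indeed, as noted just before the statement, $\xb^\cb\in\pp_F^{d_F}$ if and only if $\cb\in H_{F,d_F}^+$. Since monomial membership in an intersection of ideals is membership in each factor, we get for a lattice point $\cb\in\NN^n$ that $\xb^\cb\in I$ iff $\cb\in\Sect_F H_{F,d_F}^+$, where $F$ ranges over those with $\pp_F\in\Ass(S/I)$. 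Together with the ambient nonnegativity constraints $\xi_i\geq 0$, this realizes $\con(I)$ as a finite intersection of half-spaces.

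First I would record the set-theoretic identity $\con(I)=\bigl(\Sect_{\pp_F\in\Ass(S/I)}H_{F,d_F}^+\bigr)\cap\bigl(\Sect_{i=1}^n H_i\bigr)$. The inclusion $\subseteq$ follows because every generator $\xb^\ab$ of $I$ lies in each $\pp_F^{d_F}$ and in the nonnegative orthant, hence satisfies every defining inequality; since the right-hand side is convex, the convex hull $\con(I)$ is contained in it. For $\supseteq$ I would use integral closedness: a lattice point $\cb$ in the right-hand region satisfies $\xb^\cb\in\pp_F^{d_F}$ for all $F$, hence $\xb^\cb\in I$, so $\cb\in\con(I)$; the general (non-lattice) points of the right-hand polyhedron are then recovered by taking convex hulls of the rational points it contains, which are dense. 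This gives $\con(I)$ as an explicit $H$-polyhedron.

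The remaining issue is to identify exactly which of these half-spaces are genuinely \emph{supporting} (i.e., facet-defining or at least contact) hyperplanes, as opposed to redundant. For the hyperplanes $H_{F,d_F}$ this is where the irredundancy built into the canonical primary decomposition does the work: because $I=\Sect_F\pp_F^{d_F}$ is irredundant (Theorem~\ref{prime_intersection}(b)), no factor $\pp_F^{d_F}$ can be dropped, which forces $\con(I)$ to actually meet $H_{F,d_F}$; equivalently, there is a point of $\con(I)$ with $\sum_{i\in F}\xi_i=d_F$ for which all other inequalities are slack. For the coordinate hyperplanes $H_i$, the inequality $\xi_i\geq 0$ is supporting precisely when some point of $\con(I)$ has $\xi_i=0$, i.e. when $I$ contains a generator involving none of $x_i$ to positive power beyond what the $\pp_F$ constraints permit; the condition $(x_i)\notin\Ass(S/I)$ is exactly the criterion isolating when $H_i$ contributes a facet rather than being subsumed by the prime-power constraints.

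\textbf{The main obstacle} I anticipate is the redundancy bookkeeping in the last step: one must argue carefully that every listed hyperplane is attained by $\con(I)$ and that no other supporting hyperplanes exist. The cleanest route is to invoke the irredundancy of the canonical primary decomposition to rule out dropping any $H_{F,d_F}$, and to check directly that $H_i$ supports $\con(I)$ iff $(x_i)\notin\Ass(S/I)$: if $(x_i)\in\Ass(S/I)$ then $x_i^{d_{(x_i)}}\mid$ (the relevant constraint) already forces $\xi_i\geq d_{(x_i)}>0$ on all of $\con(I)$, so $H_i$ carries no point of the polyhedron and is not supporting, whereas if $(x_i)\notin\Ass(S/I)$ one exhibits a generator of $I$ with $x_i$-exponent zero, giving a contact point on $H_i$. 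Since the statement is essentially the geometric restatement of the algebraic intersection already proved, this is a matter of translating membership conditions into inequalities, and no hard calculation is needed beyond this redundancy analysis.
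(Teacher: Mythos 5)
Your proposal founders at the identity you ``first record,'' namely $\con(I)=\bigl(\Sect_{\pp_F\in \Ass(S/I)}H_{F,d_F}^+\bigr)\sect \RR^n_{\geq 0}$. Your argument for ``$\supseteq$'' only shows that every \emph{lattice} point of the right-hand polyhedron lies in $\con(I)$ (which is just the definition of intersection type restated), and then appeals to density of rational points. But rational points are not lattice points, and a rational polyhedron is in general strictly larger than the convex hull of its lattice points, so this step does not close. In fact the identity is false: take $I=(x_1x_2,x_1x_3,x_2x_3)$, which is squarefree and hence of strong intersection type with canonical decomposition $I=(x_1,x_2)\sect(x_1,x_3)\sect(x_2,x_3)$, all $d_F=1$. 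The point $(\tfrac12,\tfrac12,\tfrac12)$ satisfies $\xi_i+\xi_j\geq 1$ for all pairs and $\xi_i\geq 0$, yet every monomial of $I$ has total degree at least $2$, so $\con(I)\subset\{\xi_1+\xi_2+\xi_3\geq 2\}$ and $(\tfrac12,\tfrac12,\tfrac12)\notin\con(I)$. The same example destroys your exhaustiveness step: $\{\xi_1+\xi_2+\xi_3=2\}$ is a facet-defining supporting hyperplane of $\con(I)$ that is not of the form $H_{F,d_F}$ with $\pp_F\in\Ass(S/I)$ (since $\mm\notin\Ass(S/I)$ here) and is not a coordinate hyperplane. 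Irredundancy of the canonical primary decomposition can only show that none of the \emph{listed} half-spaces may be dropped; it cannot rule out the existence of further facets, and further facets do occur.

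For comparison, the paper offers no real proof --- it states only that the corollary ``follows directly from the definition,'' after observing that $\xb^{\cb}\in\pp_F^{a}$ iff $\cb\in H_{F,a}^+$. What genuinely follows directly is (i) the lattice-point statement, that a monomial $\xb^{\cb}$ lies in $I$ iff $\cb\in\NN^n$ satisfies all the listed inequalities, and (ii) that each listed hyperplane is \emph{a} supporting hyperplane of $\con(I)$ (it bounds $\con(I)$ because $I\subset\pp_F^{d_F}$, and it touches $\con(I)$ because $\min(I(\pp_F))=d_F$ by Theorem~\ref{prime_intersection}, so some generator $\ab$ of $I$ has $\ab(F)=d_F$). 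Your write-up correctly identifies the redundancy/exhaustiveness analysis as the main obstacle, but the resolution you propose cannot work, because the stronger statement you are trying to prove (that these half-spaces cut out $\con(I)$ and exhaust its facets) is not true. You would need either to weaken the conclusion to (i)--(ii) above, or to enlarge the list of hyperplanes (e.g.\ to include those coming from $\min(I(\pp_F))$ for all $\pp_F\in V^*(I)$, not just the associated primes).
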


\medskip
N.V.~Trung, in a discussion with the first author of this paper,  sketched an argument which shows that for any  squarefree monomial ideal $I\subset S$ one has that $I^{(rk)}\subset I^k$ for all $k\geq 1$. Here $r$ denotes the highest degree of a generator in the (unique) minimal set of monomial generators of $I$, and $I^{(t)}$ denotes the $t^{th}$ symbolic power of $I$ which for a monomial ideal we define as follows: Let $Q_1,\ldots, Q_s$ be the uniquely determined primary components corresponding to the minimal prime  ideals of $I$. Then $I^{(t)}=\Sect_{i=1}^sQ_i^t$. (It is also common to define the $t^{th}$ symbolic power of $I$ to be $S\sect \Sect_{\pp\in Ass(S/I)}I^tS_\pp$. This symbolic power is contained in the one defined here.)  In general we expect that $I^{(rk)}\subset I^k$ for all $k$ and all monomial ideals. On the other hand, it follows from a result of Ein, Lazarsfeld and Smith \cite[Theorem A]{ELS}, that for all $k$,  $J^{(ck)} \subset  J^k$ for any graded ideal $J \subset S$ of codimension $c$. Of course there are cases where $r>c$ and vice versa. In the squarefree case however both numbers are bounded by the dimension  of $S$.

Here we prove a related result for monomial ideals of intersection type.  Recall that an ideal $J\subset I$  is said to be a {\em reduction} of $I$,  if $I^k = JI^{k-1}$ for some $k\geq 2$. Following Kodiyalam we denote by $\rho(I)$ the minimum of $\theta(J)$  over all graded
reductions $J$ of $I$, where $\theta(J)$ denotes the maximal degree of a generator in a minimal set of generators of $J$.

\begin{Theorem}\label{symbolic_power}
Let $I\subset S$ be a monomial ideal. Then the following hold:
\begin{enumerate}
\item[(a)] if $I^k$ is of intersection type, then $I^{(\reg(I^k))}\subset I^k$;
\item[(b)] if $I^k$ is of intersection type for all $k\gg 0$, then $I^{(rk)}\subset I^k$ for all $k\gg 0$,
where $r=\rho(I)+1$. In particular, $r\leq \max(I)+1$. Moreover, if $I^k$ has a linear resolution for all $k\gg  0$, then $r$ can be chosen to be $\max(I)$.
\end{enumerate}
\end{Theorem}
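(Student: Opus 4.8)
The plan is to prove (a) first and then obtain (b) from it together with Kodiyalam's linearity theorem and the monotonicity of symbolic powers.

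For (a), set $\ell=\reg(I^k)$ and write the canonical primary decomposition $I^k=\Sect_{\pp\in\Ass(S/I^k)}\pp^{d_\pp}$ afforded by Theorem~\ref{prime_intersection}. The first step is to show that $d_\pp\le\ell$ for every $\pp\in\Ass(S/I^k)$. By Theorem~\ref{bounded} we have $d_\pp\le\reg(I^k(\pp))$, so it suffices to know that monomial localization does not increase regularity, i.e. $\reg(I^k(\pp))\le\reg(I^k)$; this is the only non-formal input and I address it below. Granting it, $d_\pp\le\ell$ for all $\pp$. The second step is purely formal: by definition $I^{(\ell)}=\Sect_{\qq\in\Min(I)}Q_\qq^{\ell}$, where $Q_\qq$ is the $\qq$-primary component of $I$. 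Fix $\pp\in\Ass(S/I^k)$; since $\pp\supseteq I$ there is a minimal prime $\qq\subseteq\pp$ of $I$, and then
\[
I^{(\ell)}\subseteq Q_\qq^{\ell}\subseteq\qq^{\ell}\subseteq\pp^{\ell}\subseteq\pp^{d_\pp},
\]
using $Q_\qq\subseteq\sqrt{Q_\qq}=\qq$, the inclusion $\qq\subseteq\pp$, and $d_\pp\le\ell$. Intersecting over all $\pp\in\Ass(S/I^k)$ yields $I^{(\ell)}\subseteq I^k$, as desired.

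For (b), recall first that symbolic powers are nested, $I^{(a)}\subseteq I^{(b)}$ whenever $a\ge b$, because $Q_\qq^{a}\subseteq Q_\qq^{b}$. By Kodiyalam's theorem \cite{K} there is a constant $e$ with $\reg(I^k)=\rho(I)k+e$ for $k\gg 0$. Put $r=\rho(I)+1$; then $rk=\rho(I)k+k\ge\rho(I)k+e=\reg(I^k)$ once $k\ge e$, so for $k\gg 0$ the nesting together with part (a) gives
\[
I^{(rk)}\subseteq I^{(\reg(I^k))}\subseteq I^k.
\]
Since $I$ is a reduction of itself, $\rho(I)\le\theta(I)=\max(I)$, whence $r\le\max(I)+1$. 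Finally, if $I^k$ has a linear resolution for $k\gg 0$, then $\reg(I^k)$ equals the common degree of the generators of $I^k$, which is at most $\max(I)\,k$ because each minimal generator of $I^k$ is a product of $k$ generators of $I$; thus already $r=\max(I)$ satisfies $rk\ge\reg(I^k)$, and the same argument gives $I^{(\max(I)\,k)}\subseteq I^k$.

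The main obstacle is the regularity bound $\reg(I(\pp))\le\reg(I)$ for monomial localizations, which is precisely what makes $\ell=\reg(I^k)$ (rather than a larger exponent) work uniformly over all—possibly embedded—associated primes. I expect to obtain it by comparing multigraded Betti numbers: passing from $I$ to $I(\pp_F)$ only forgets the degrees in the variables outside $F$, so each multidegree $\bb$ with $\beta_{i,\bb}(I)\ne 0$ contributes in a multidegree of total degree $\le|\bb|$, and hence $\reg(I(\pp_F))\le\reg(I)$; alternatively this bound is available from the analysis of monomial localizations in \cite{HRV}. Every remaining step is routine manipulation of the canonical decomposition, of radicals of primary components, and of the nesting of symbolic powers.
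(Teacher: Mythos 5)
Your proof is correct and follows essentially the same route as the paper: part (a) via the canonical decomposition, Theorem~\ref{bounded}, and a minimal prime of $I$ sitting inside each associated prime of $I^k$; part (b) via Kodiyalam's linearity theorem together with the nesting of symbolic powers. The only difference is that you explicitly isolate and justify the inequality $\reg(I^k(\pp))\le \reg(I^k)$ for monomial localizations, which the paper uses silently in the chain $d_{\qq}\le \reg(I^k(\qq))\le \reg(I^k)$ --- an added precision rather than a divergence.
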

\begin{proof}
(a) Set $s=\reg(I^k)$, and let $\qq\in \Ass(S/I^k)$.  There exists $\pp\in \Min(S/I)$ with $\pp\subset \qq$. It follows that $\pp^s\subset \qq^s\subset \qq^{d_{\qq}}$, because $d_\qq\leq \reg(I^k(\qq))\leq s$, see Theorem~\ref{bounded}. This implies that
\[
I^{(s)}=\Sect_{\pp\in \Min(S/I)}\pp^s\subset \Sect_{\pp\in \Ass(S/I^k)}\pp^{d_\pp}=I^k.
\]

(b) By  Kodiyalam \cite{K} we know that $\reg(I^k)=\theta(I)k+c$ for $k\gg 0$. If $I^k$ has a linear resolution for all $k\gg 0$, then $c=0$, and the assertion follows from (a). Otherwise $c>0 $, and for all $k\geq c$ one has that $(\theta(I)+1)k\geq \reg(I^k)$. Again (a) yields the desired conclusion.
\end{proof}

\begin{Corollary}
\label{symbolic_poly}
Let $I$ be a polymatroidal ideal, generated in degree $d$. Then $I^{(dk)}\subset I^k$ for all $k$.
\end{Corollary}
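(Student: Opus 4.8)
The plan is to apply Theorem~\ref{symbolic_power}(a) to each power $I^k$ individually. That theorem tells us that if $I^k$ is of intersection type, then $I^{(\reg(I^k))}\subset I^k$. So it suffices to verify two things for every $k\geq 1$: first, that $I^k$ is of intersection type, and second, that $\reg(I^k)=dk$. Granting both, the conclusion $I^{(\reg(I^k))}\subset I^k$ becomes precisely $I^{(dk)}\subset I^k$, which is the assertion of the corollary, and it holds for all $k$ without any need to pass to large $k$.

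The key input is the classical fact that products of polymatroidal ideals are again polymatroidal (see \cite{CH}); in particular $I^k$ is polymatroidal for every $k\geq 1$. By Proposition~\ref{polym_c} each $I^k$ is then of strong intersection type, hence of intersection type, so Theorem~\ref{symbolic_power}(a) indeed applies to every power. Furthermore, being polymatroidal, $I^k$ has a linear resolution and is generated in the single degree $dk$, whence $\reg(I^k)=dk$. Combining the two observations gives $I^{(dk)}=I^{(\reg(I^k))}\subset I^k$ for all $k$, as desired.

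There is no genuine obstacle beyond assembling these ingredients, but one point deserves care: it is the requirement ``for all $k$'' (as opposed to $k\gg 0$) that forces the use of part (a) rather than part (b) of Theorem~\ref{symbolic_power}. The refinement of part (b) would yield the exponent $r=\max(I)=d$ as well, via the linear resolution of all powers, but only in the range $k\gg 0$; applying part (a) power by power instead removes this restriction and covers every $k$ uniformly.
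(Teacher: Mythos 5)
Your proof is correct and is exactly the argument the paper intends (the corollary is stated without an explicit proof): since products of polymatroidal ideals are polymatroidal, every power $I^k$ is polymatroidal, hence of intersection type by Proposition~\ref{polym_c}, with a linear resolution giving $\reg(I^k)=dk$, and Theorem~\ref{symbolic_power}(a) then yields $I^{(dk)}\subset I^k$ for every $k$. Your remark on why part (a) rather than part (b) must be used to cover all $k$ is also apt.
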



\begin{thebibliography}{10}



\bibitem{BaHe} S.\ Bandari, J.\ Herzog, Monomial localizations and polymatroidal ideals, Eur. J. Combinatorics {\bf 34}, 752--763 (2013).

\bibitem{B} A.\ Banerjee, Bounds on the regularity of ideals associated to simple graphs, preprint 2013.

\bibitem{Bo} K.\ Borna, On linear resolution of powers of an ideal, Osaka J. Math. {\bf 46}, 1047--1058 (2009).

\bibitem{BH} W.\ Bruns, J.\ Herzog, Cohen--Macaulay rings, Revised Edition, Cambridge University Press, Cambridge, 1996.


\bibitem{CMS} J.\ Chen, S.\ Morey, A.\ Sung, The stable set of associated primes of the ideal of a graph, Rocky Mountain J. Math. {\bf 32}, 71--89 (2002).

\bibitem{CH} A.\ Conca, J.\ Herzog, Castelnuovo-Mumford regularity of products of ideals, Collectanea Math. {\bf 54}, 137--152 (2003).

\bibitem{ELS} L.\ Ein, R.\ Lazarsfeld, K.\ Smith, Uniform bounds and symbolic powers on smooth varieties, Inventiones Math. {\bf 144}, 241--252 (2001).

\bibitem{Ei} D.\ Eisenbud, Commutative Algebra with a View Toward Algebraic Geometry, Springer, 1994.

\bibitem{EG} D.\ Eisenbud and S.\ Goto, Linear Free Resolutions and Minimal Multiplicity, J. Algebra {\bf 88}, 89--133 (1984).


\bibitem{FMS} C.\ Francisco, J.\ Mermin and J.\ Schweig, Generalizing the Borel property, J. London Math. Soc. (2) {\bf 87}, 724--740 (2013).

\bibitem{FMN} C.\ Francisco, J.\ Migliore and U.\ Nagel, On the componentwise linearity and the minimal free resolution of a tetrahedral curve, J. Algebra {\bf 299}, 535--569 (2006).

\bibitem{FV} C.\ Francisco and A.\ Van Tuyl, Some families of componentwise linear monomial ideals, Nagoya Math. J. {\bf 187}, 115--156 (2007).

\bibitem{Si} G.\ M.\ Greuel, G.\ Pfister and H. Sch\"onemann, Singular 2.0. A Computer Algebra System for Polynomial Computations. Centre for Computer Algebra, University of Kaiserslautern (2001), http://www.singular.uni-kl.de.

\bibitem{HH} J.\ Herzog, T.\ Hibi, {\em Monomial Ideals}. GTM 260. Springer 2010.

\bibitem{HH2} J.\ Herzog, T.\ Hibi, Discrete polymatroids, J. Algebraic Combinatorics {\bf 16}, 239--268 (2002).

\bibitem{HH3} J.\ Herzog, T.\ Hibi, Bounding the socles of powers of squarefree monomial ideals, arXiv:1308.5400v1.

\bibitem{HPV} J.\ Herzog, D.\ Popescu, M.\ Vladoiu, On the Ext-modules of ideals of Borel type, Contemporary Mathematics {\bf 331}, 171--186 (2003).

\bibitem{HRV} J.\ Herzog, A.\ Rauf, M.\ Vladoiu, The stable set of associated prime ideals of a polymatroidal ideal, J. Algebraic Combinatorics {\bf 37}, 289--312 (2013).

\bibitem{K} V.\ Kodiyalam, Asymptotic behaviour of Castelnuovo--Mumford regularity, Proc. AMS Vol. {\bf 128},  407--411 (1999).

\bibitem{MMV} J.\ Martinez--Bernal, S.\ Morey, R.\ Villarreal, Associated primes of powers of edge ideals, Collect. Math. {\bf 63}, 361--374 (2012).

\bibitem{MN} J.\ Migliore and U.\ Nagel, Tetrahedral curves, Int. Math. Res. Notices {\bf 15}, 899--939 (2005).

\bibitem{NP} E.\ Nevo, I.\ Peeva, $C_4$-free edge ideals, J. Algebraic Combinatorics {\bf 37}, 243--248 (2013).

\bibitem{SVV} A.\ Simis, W.\ Vasconcelos, R.\ Villarreal, On the ideal theory of graphs, J. Algebra {\bf 167}, 389--416 (1994).

\bibitem{Vl} M.\ Vladoiu, Equidimensional and unmixed ideals of Veronese type, Comm. in Algebra {\bf 36}, 3378--3392 (2008).

\end{thebibliography}
\end{document}